\newenvironment{varalgorithm}[1]
{\algorithm}
{\endalgorithm}
\newtheorem{thm}{Theorem}[section]
\newtheorem{lem}{Lemma}[section]
\theoremstyle{definition}
\theoremstyle{remark}
\newtheorem{rem}{Remark}[section]
\numberwithin{equation}{section}
\numberwithin{equation}{section}
\newcounter{saveeqn}
\newcommand{\eqnref}[1]{(\ref {#1})}
\newcommand{\Bz}{\mathbf{z}}
\newcommand{\bE}{\mathbf{E}}
\newcommand{\bH}{\mathbf{H}}
\newcommand{\Bx}{\mathbf{x}}
\newcommand{\By}{\mathbf{y}}
\newcommand{\bZ}{\mathbf{z}}
\newcommand{\Gl}{\lambda}
\newcommand{\Gs}{\sigma}
\newcommand{\Ge}{\varepsilon}
\newcommand{\Acal}{\mathcal{A}}
\newcommand{\Kcal}{\mathcal{K}}
\newcommand{\Scal}{\mathcal{S}}
\newcommand{\Mcal}{\mathcal{M}}
\newcommand{\Ocal}{\mathcal{O}}
\newcommand{\Ical}{\mathcal{I}}
\newcommand{\Tcal}{\mathcal{T}}
\newcommand{\Ncal}{\mathcal{N}}
\newcommand{\ds}{\displaystyle}
\newcommand{\RR}{\mathbb{R}}
\newcommand{\p}{\partial}
\newcommand{\beq}{\begin{equation}}
\newcommand{\eeq}{\end{equation}}
\DeclareMathAlphabet{\itbf}{OML}{cmm}{b}{it}
\title[Locating multiple geomagnetic anomalies]{Locating multiple magnetized anomalies by geomagnetic monitoring}
\author{Rongliang Chen}
\address{Shenzhen Institutes of Advanced Technology, Chinese Academy of Sciences, Shenzhen, P.~R.~China}
\email{rl.chen@siat.ac.cn}
\author{Youjun Deng}
\address{School of Mathematics and Statistics, Central South University, Changsha, Hunan, P. R. China.}
\email{youjundeng@csu.edu.cn; dengyijun\_001@163.com}
\author{Yang Gao}
\address{School of Mathematics and Statistics, Central South University, Changsha, Hunan, P. R. China.}
\email{gaoyang\_006@126.com}
\author{Jingzhi Li}
\address{Department of Mathematics \& National Center for Applied Mathematics Shenzhen \& SUSTech International Center for Mathematics, Southern University of Science and Technology, Shenzhen 518055, P.~R.~China. }
\email{li.jz@sustech.edu.cn}
\author{Hongyu Liu}
\address{Department of Mathematics, City University of Hong Kong, Hong Kong SAR, China}
\email{hongyliu@cityu.edu.hk}
\begin{document}
\maketitle

\begin{abstract}

The presence of magnetized anomalies in the shell of the Earth interrupts its geomagnetic field. We consider the inverse problem of identifying the anomalies by monitoring the variation of the geomagnetic field. Motivated by the theoretical unique identifiability result in \cite{DLL:19}, we develop a novel numerical scheme of locating multiple magnetized anomalies. 
In our study, we do not assume that the source that generates the geomagnetic field, and the medium configurations of the Earth's core and the magnetized anomalies are a-priori known. The core of the reconstruction scheme is a novel imaging functional whose quantitative behaviours can be used to identify the anomalies. Both rigorous analysis and extensive numerical experiments are provided to verify the effectiveness and promising features of the proposed reconstruction scheme.


\medskip

\medskip

\noindent{\bf Keywords:}~~geomagnetic monitoring, magnetized anomalies, locating, geophysical inverse problem, imaging functional, direct imaging

\noindent{\bf 2010 Mathematics Subject Classification:}  35R30, 86A22, 86A25, 78M35, 35Q86 

\end{abstract}

\section{Introduction}

\subsection{Problem formulation and setup}\label{sec:setup}

It is widely accepted in geophysics that the Earth is of a multi-layered structure. In the present study, we simplify the geometric modelling of the Earth by assuming that it is of a core-shell structure; see Figure~\ref{fig:earth} for a schematic illustration. We denote by $\Omega_c$ and $\Omega_s$ the core and shell of the Earth, respectively. Let $\Omega:=\Omega_s\cup\overline{\Omega_c}$ denote the Earth. Both  $\Omega$ and $\Omega_c$ are bounded simply-connected $C^2$ domains in $\mathbb{R}^3$. Next we describe the medium configuration of the whole space. Let $\varepsilon_0$ and $\mu_0$ be positive constants which respectively signify the electric permittivity and the magnetic permeability of the outer space $\mathbb{R}^3\backslash\overline{\Omega}$. For the shell $\Omega_s$, we assume that the electric permittivity is described by a positive $L^\infty$ function $\varepsilon_s(\Bx)$, $\Bx\in\Omega_s$, and the magnetic permeability is the same as that in the outer space. We let the medium in the core $\Omega_c$ be characterised by positive $L^\infty$ functions $\varepsilon_c(\Bx)$, $\mu_c(\Bx)$ and $\sigma_c(\Bx)$, which describe the electric permittivity, the magnetic permeability and the electric conductivity, respectively. The electric conductivity of the outer space and the shell is assumed to be identically zero.

\begin{figure}
    \centering
    \includegraphics[width=0.4\textwidth]{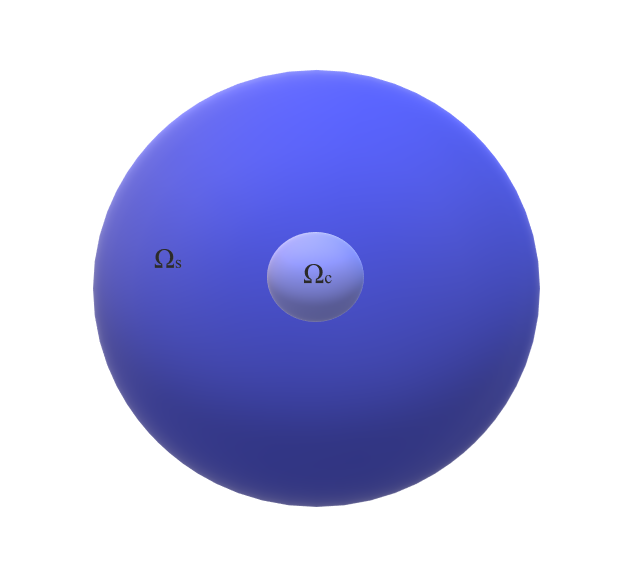}
    \caption{Schematic illustration of the core-shell structure of the Earth.}
    \label{fig:earth}
\end{figure}

It is assumed that the presence of certain magnetized anomalies in the shell $\Omega_s$ interrupts the geomagnetic field of the Earth. Next, we describe the mathematical modelling of such a practical scenario. Suppose that there are $l_0$ magnetized anomalies, which are assumed to be simply-connected Lipschitz domains $B_l, l=1,2,\ldots, l_0$. For each magnetized anomaly $B_l$, its electromagnetic parameters are described by positive constants $\varepsilon_l, \mu_l, \sigma_l$, referring to the electric permittivity, the magnetic permeability and the electric conductivity, respectively. It is important to stress that the magnetic permeability of the magnetized anomaly is different from that of the surrounding medium in the shell. From a practical point of view, the magnetized anomalies are small compared to the size of the Earth. Hence, we suppose that
\beq\label{eq:permeab02}
B_l=\delta U+\Bz_l, \quad l=1, 2, \ldots, l_0,
\eeq
where $\delta\in\mathbb{R}_+$ and $\delta\ll 1$, $U\Subset \Omega_s$ is a bounded Lipschitz domain, and $\Bz_l\in\Omega_s$, $l=1, 2, \ldots, l_0$, are far away from $\p \Omega_s$ such that $\Bx-\Bz_l\gg\delta$, for any $\Bx\in\p \Omega_s$. Note that we have supposed that all the magnetic anomalies are of the same shape and size for the convenience of theoretical analysis. We shall see in the numerical examples in Section~\ref{sec:experiments} that such a restriction is not necessary.

Next, we present the PDE system that governs the propagation of the geomagnetic waves. We use $\bE $ and $\bH $, to represent the electric and magnetic fields in the frequency domain, respectively. Let the functions $\mathbf{v}\in ( L^\infty(\Omega_c))^3$  and $\rho\in L^2(\Omega_c)$ describe the fluid velocity and the charge density of the core, respectively. Moreover, both of them have zero extensions to $\RR^3$. Then the electric and magnetic fields satisfy the following time-harmonic Maxwell system:
\begin{equation}\label{eq:pss01}
\left \{
 \begin{array}{ll}
\nabla\times\bH=-\mathrm{i}\omega\varepsilon\bE+\Gs(\bE+\mu\mathbf{v}\times\bH)  & \mbox{in} \ \ \RR^3,\medskip\\
\nabla\times\bE=\mathrm{i}\omega\mu\bH & \mbox{in} \ \ \RR^3,\medskip\\
\nabla\cdot (\mu\bH)=0,\quad \nabla\cdot (\varepsilon\bE)=\rho  & \mbox{in} \ \ \RR^3,\medskip\\
 \lim_{\|\Bx\|\rightarrow\infty} \|\Bx\| \big( \sqrt{\mu_0}\bH \times\frac{\Bx}{\|\Bx\|}- \sqrt{\varepsilon_0}\bE\big)=\mathbf{0},
 \end{array}
 \right .
 \end{equation}
where $\mathrm{i}:=\sqrt{-1}$ is the imaginary unit, $\omega\in\mathbb{R}_+$ is the angular frequency of the wave propagation, and $\|\cdot\|$ stands for the Euclidean norm of a vector. In \eqref{eq:pss01}, the last limit is known as the Silver-M\"uller radiation condition which characterizes the outward radiating solutions and it holds uniformly in all directions $\hat{\Bx}:=\Bx/\|\Bx\|$ (cf. \cite{CK, DHU:17, Ned}).

Under the setup in our description above, if there is no magnetized anomaly in the shell, the medium parameters $\varepsilon, \mu$ and $\Gs$  in \eqref{eq:pss01} are described by
\begin{equation}\label{eq:paradef01}
\left \{
\begin{split}
\varepsilon(\Bx)&=(\varepsilon_c(\Bx)-\varepsilon_0)\chi(\Omega_c)+(\varepsilon_s(\Bx)-\varepsilon_0)\chi(\Omega_s)+\varepsilon_0,\\
\mu(\Bx)&=(\mu_c(\Bx)-\mu_0)\chi(\Omega_c)+\mu_0,\\
\Gs(\Bx)&=\Gs_c(\Bx)\chi(\Omega_c),\\
\end{split}
\right .
\end{equation}
where, and also in what follows, $\chi$ denotes the characteristic function. On the other hand, if there are $l_0$  magnetized anomalies in the shell, they are described by
\beq\label{eq:paradef02}
\left \{
\begin{split}
\varepsilon(\Bx)=& (\varepsilon_c(\Bx)-\varepsilon_0)\chi(\Omega_c)+(\varepsilon_s(\Bx)-\varepsilon_0)\chi(\Omega_s\setminus\overline{\bigcup_{l=1}^{l_0}B_l}) \\
&+\sum_{l=1}^{l_0} (\varepsilon_l-\varepsilon_0)\chi(B_l)+\varepsilon_0,\\
\quad\mu(\Bx)=& (\mu_c(\Bx)-\mu_0)\chi(\Omega_c)+\sum_{l=1}^{l_0} (\mu_l-\mu_0)\chi(B_l)+\mu_0,\\
\Gs(\Bx)=& \Gs_c(\Bx)\chi(\Omega_c)+\sum_{l=1}^{l_0} \sigma_l\chi(B_l).\\
\end{split}
\right .
\eeq

Hence, the solutions of \eqref{eq:pss01} differ between the two cases. In what follows, we write $(\mathbf{E}_0, \mathbf{H}_0)$ to denote the solution of \eqref{eq:pss01} associated to the parameter configuration in \eqref{eq:paradef01}, and $(\mathbf{E}, \mathbf{H})$ to be the solution of \eqref{eq:pss01} associated to the parameter configuration in \eqref{eq:paradef02}. The geomagnetic inverse problem is to use this difference to determine the magnetized anomalies; that is,
\begin{equation}\label{eq:geom3}
\mathbf{H}(\Bx;\omega)-\mathbf{H}_0(\Bx;\omega)\big|_{(\Bx,\omega)\in\Gamma\times\mathbb{R}_+}\longrightarrow \bigcup_{l=1}^{l_0} B_l,
\end{equation}
where $\mathbf{H}(\cdot; \omega)$ and $\mathbf{H}_0(\cdot; \omega)$ indicate the dependence on the frequency $\omega$, and $\Gamma$ is an open surface located in the outer space $\mathbb{R}^3\backslash\overline{\Omega}$. It is emphasised that for the inverse problem \eqref{eq:geom3}, we shall not assume any a-priori knowledge about the core of the Earth, namely $\varepsilon_c, \mu_c, \sigma_c$ as well as $\rho, \mathbf{v}$ are not assumed to known in advance.

\subsection{Background and discussion of main results}

The geomagnetic field, a.k.a. the Earth's magnetic field, is generated by the motion of electrically conducting fluids in the Earth's outer core; see \cite{BPCo96,Fey,Jacobs,Weiss} and the references cited therein for more background discussion about this aspect. As shall be described in Subsection \ref{sec:setup}, the geomagnetic field varies due to the presence of the magnetized anomalies. By measuring such a geomagnetic variation, one can infer information about the magnetized anomalies, such as the location, the shape, the material parameters and the motion. This magnetic anomaly detecting (MAD) technique is widely used in various fields of practical importance, including the exploration of underground minerals, detection of nuclear submarines, and earthquake prediction. Hence, the development of effective and stable MAD techniques have attracted considerable attention in the literature; see \cite{MAD1,MAD2,MAD3,MAD4,MAD5} and the references cited therein. 

For the geomagnetic inverse problem \eqref{eq:geom3} of the current study, there are several major challenges. First, as discussed earlier, we shall not impose any a-priori knowledge about the core of the Earth, including its medium configuration, namely $\varepsilon_c, \mu_c$ and $\sigma_c$, as well as the interior fluid velocity $\mathbf{v}$ and the charge density $\rho$ of the core. This is highly important from a practical point of view since our knowledge about the Earth's core is limited. This is also a salient feature of our inverse problem \eqref{eq:geom3} compared to many existing ones in the literature, where one usually assumes that most of the information of the underlying PDE system is known except those ones to be recovered. Second, we would like to emphasise that it is directly verified that the inverse problem \eqref{eq:geom3} is nonlinear, and also ill-posed (though heuristically without verification since it is not the focus of our study). Third, we would like to point out the dataset used in \eqref{eq:geom3}. In fact, we shall not make use of the full frequency information, and instead, we shall mainly make use of the low-frequency information. That is, in \eqref{eq:geom3}, $(\hat{\Bx},\omega)\in \hat{\Gamma}\times\mathbb{R}_+$ is actually replaced to be $(\hat{\Bx},\omega)\in \hat{\Gamma}\times(0, \omega_0)$ with $\omega_0\in\mathbb{R}_+$ being a small and fixed constant.  In fact, our study is motivated by the theoretical results in \cite{DLL:19} by two authors of the current article which we shall discuss a bit more details in what follows, and we also refer to \cite{DLL20,DLT} for related theoretical studies on geomagnetic anomaly identifications. 


In \cite{DLL:19}, a rigorous mathematical theory was established for identifying magnetized anomalies by measuring the variation of the geomagnetic field due to the presence of the anomaly. Our numerical study is mainly motivated by that theoretical work. In fact, 
we develop a novel imaging scheme to recover the location of magnetized anomalies by constructing an indicator functional $I(\Bz), \Bz\in\Omega_s$, which can effectively identify the locations of multiple magnetised anomalies. In constructing the imaging functional, we only make use of the static part of the difference of the magnetic fields, namely the leading-part of $\tilde{\mathbf{H}}(\mathbf{x}):=\mathbf{H}(\Bx, \omega)-\mathbf{H}_0(\Bx, \omega)$ with respect to $\omega\rightarrow 0^+$. Based on the low-frequency asymptotics of the geomagnetic fields, the quantitative indicating behaviours of the functional is rigorously justified. 
 One shall see that the indicator function will blow-up when the sampling point is sufficiently close to location of an anomaly and will be bounded when the sampling point is located away from it. Moreover, we only use inner products to construct an indicator function without any complicated procedure. The results of numerical experiments show that our algorithm is efficient and robust. Finally, it is remarked that our method is mainly concerned with the identification of the location of the (small) magnetised anomalies, and we shall not consider the further determination of the material parameters of the anomalies. It is also pointed out that we shall not assume that the material properties of the magnetised anomalies are known in advance. This makes our study practically meaningful for certain applications, say e.g. the submarine detection and the mineral detection.  

The paper is organized as follows. In Section~\ref{sec:pre}, we present preliminary knowledge on the forward problem as well as the layer potential method for our subsequent use. In Section~\ref{sec:main}, we introduce our novel imaging functional as well as verify its desired indicating behaviours. Using this imaging functional, we then propose the imaging scheme for locating the magnetised anomalies. Numerical experiments are presented to verify the effectiveness and efficiency of the proposed method in Section~\ref{sec:experiments}.

\section{Preliminaries}\label{sec:pre}

In this section, we present some preliminary results of the direct problem \eqref{eq:pss01} as well as the layer potential method for our subsequent use.

We first introduce the function spaces
\[
H(\mathrm{curl}, U)=\{f\in (L^2(U))^3|\ \nabla\times f\in (L^2(U))^3\},
\]
and
\[
H_{loc}(\mathrm{curl}, X):=\{f|_U\in H(\mathrm{curl}, U)|\ U\ \ \mbox{is any bounded subdomain of $X$}\}.
\]
There exists a unique pair of solutions $(\bE,\bH)\in H_{loc}(\mathrm{curl}, \mathbb{R}^3)\times H_{loc}(\mathrm{curl}, \mathbb{R}^3)$ to \eqref{eq:pss01}. For  detailed research about the well-posedness of \eqref{eq:pss01}, we refer to \cite{LRX}.

Next, we introduce the layer potential operators which facilitate the integral representations of the solutions to \eqref{eq:pss01}. We refer to \cite{HK07, CK, Ned} for general introductions on layer potential operators and \cite{DHH:17,DHU:17,DLL20} for those related to the current study. Let $\Gamma_k$ be the fundamental solution  to the partial differential operator $\Delta +k^2$, which is given by
\begin{equation}\label{Gk} 
\Gamma_k(\Bx) = -\frac{e^{\mathrm{i}k\|\Bx\|}}{4 \pi \|\Bx\|},~~ \Bx\in\RR^3, \quad \Bx\neq \mathbf{0}.
\end{equation}
Suppose that $D\subset \RR^3$ is a bounded Lipschitz domain. For a given scalar density function $\phi$, the single layer potential operator $\Scal_D^k$ and  the Neumann-Poincar\'e operator $\Kcal_D^{k}$ are respectively given by
\beq\label{eq:layperpt1}
\Scal_{D}^{k}[\phi](\Bx):=\int_{\p D}\Gamma_k(\Bx-\By)\phi(\By) ~\mathrm{d} s_\By,
\eeq
\beq\label{eq:layperpt2}
\Kcal_{D}^{k}[\phi](\Bx):=\mbox{p.v.}\quad\int_{\p D}\frac{\p\Gamma_k(\Bx-\By)}{\p \nu_y}\phi(\By)~\mathrm{d} s_\By,
\eeq
where p.v. stands for the Cauchy principle value and $\nu$ denote the unit outside normal vector to $\p D$. They also have the following mapping property:
\beq
\begin{split}
\Scal_D^k: H^{-1/2}(\p D)&\rightarrow H^{1}(\RR^3\setminus\p D),\\
\Kcal_D^{k}: H^{1/2}(\p D)&\rightarrow H^{1/2}(\p D),\\
\end{split}
\eeq
respectively. Moreover, they satisfy the following trace formula:
\beq \label{eq:trace}
\frac{\p}{\p\nu}\Scal_D^{k}[\phi] \Big|_{\pm} = (\pm \frac{1}{2}I+(\Kcal_{D}^{k})^*)[\phi] \quad \mbox{on} \, \p D,
\eeq
where $(\Kcal_{D}^{k})^*$ is the adjoint operator of $\Kcal_D^{k}$; $\pm$ signify the traces taken from the outside and inside of $D$, and $I$ is the identity operator.  

To derive the integral representation of the Maxwell system, the vectorial layer potentials shall be used. Before that, we introduce the normed spaces of the tangential fields on the boundary $\partial D$:
\begin{align*}
L_T^2(\p D):=\{\Phi\in {L^2(\p D)}^3: \nu\cdot \Phi=0\}.
\end{align*}
Let $\nabla_{\p D}\cdot$ denote the surface divergence. Set
\begin{align*}
\mathrm{TH}({\rm div}, \p D):&=\Bigr\{ {\Phi} \in L_T^2(\partial D):
\nabla_{\partial D}\cdot {\Phi} \in L^2(\partial D) \Bigr\},\\
\mathrm{TH}({\rm curl}, \p D):&=\Bigr\{ {\Phi} \in L_T^2(\partial D):
\nabla_{\partial D}\cdot ({\Phi}\times {\nu}) \in L^2(\partial D) \Bigr\},
\end{align*}
endowed with the norms
\begin{align*}
&\|{\Phi}\|_{\mathrm{TH}({\rm div}, \p D)}=\|{\Phi}\|_{L^2(\p D)}+\|\nabla_{\p D}\cdot {\Phi}\|_{L^2(\p D)}, \\
&\|{\Phi}\|_{\mathrm{TH}({\rm curl}, \p D)}=\|{\Phi}\|_{L^2(\p D)}+\|\nabla_{\p D}\cdot({\Phi}\times \nu)\|_{L^2(\p D)},
\end{align*}
respectively.

For a vectorial density $\Phi \in \mathrm{TH}(\mbox{div}, \p D)$,  the vectorial single layer potential is given by
\beq\label{defA}
\ds\mathcal{A}_D^{k}[\Phi](\Bx) := \int_{\p D} \Gamma_{k}(\Bx-\By)
\Phi(\By) ~\mathrm{d} s_\By, \quad \Bx \in \RR^3\setminus\p B.
\eeq
It is known that $\nabla\times\mathcal{A}_D^{k}$ satisfies the following jump formula
\begin{equation}\label{jumpM}
\nu \times \nabla \times \mathcal{A}_D^{k}[\Phi]\big\vert_\pm = \mp \frac{\Phi}{2} + \mathcal{M}_D^{k}[\Phi] \quad \mbox{on}\,  \p D,
\end{equation}
where \begin{equation*}
 \quad \nu \times \nabla \times \mathcal{A}_D^{k}[\Phi]\big\vert_\pm (\Bx)= \lim_{t\rightarrow 0^+} \nu \times \nabla \times \mathcal{A}_D^{k}[\Phi] (\Bx\pm t \nu),~~\forall \Bx\in \p D,
\end{equation*}
and
\beq\label{Mk}
\mathcal{M}^{k}_D[\Phi](\Bx)= \mbox{p.v.}\quad\nu  \times \nabla \times \int_{\p D} \Gamma_{k}(\Bx-\By) \Phi(\By) ~\mathrm{d} s_\By.
\eeq
We also define $\mathcal{L}^k_D: \mathrm{TH}({\rm div}, \p D) \rightarrow \mathrm{TH}({\rm div}, \p D)$ by
\beq\label{Lk}
\mathcal{L}^k_D[\Phi](\Bx):= \nu_\Bx  \times \nabla\times\nabla\times \mathcal{A}_D^k[\Phi](\Bx)=\nu_\Bx  \times \big(k^2\mathcal{A}_D^k[\Phi](\Bx)
+\nabla\nabla\cdot\mathcal{A}_D^k[\Phi](\Bx)\big).
\eeq

\section{The imaging scheme and its theoretical justification}\label{sec:main}

In this section, we develop the imaging scheme for locating multiple magnetised anomalies. The core is a novel and effective indicating functional. We shall first give its construction as well as use it to derive the imaging scheme. Then we shall conduct a rigorous analysis to justify the desired indicating behaviours of the imaging functional. In order to ease the exposition, we shall mainly consider the case with a single magnetised anomaly (i.e. $l_0=1$) and then remark the extension to the case with multiple but well-separated anomalies. Moreover, throughout the rest of the paper, we assume that $\Gamma$ in \eqref{eq:geom3} is an open portion of a central sphere $S_{R_0}$ with a sufficiently large radius $R_0$ that encloses $\Omega$. This assumption shall be needed in our subsequent study since we will manipulate the data on the sphere. On the other hand, it is not an essential ingredient in our study since one can reduce the inverse problem \eqref{eq:geom3} to an equivalent one by using the so-called far-field patterns, which are defined on the unit sphere as follows. From the following asymptotic expansions, one can obtain the far field patterns $\mathbf{H}_0^\infty $ and $\mathbf{H}^\infty$  (cf. \cite{CK,Ned}):
\begin{equation}\label{eq:farf1}
\begin{split}
\mathbf{H}_0(\Bx;\omega)=\frac{e^{\mathrm{i}k_0\|\Bx\|}}{\|\Bx\|} \mathbf{H}_0^\infty(\hat{\Bx};\omega)+\mathcal{O}\left(\frac{1}{\|\Bx\|^2} \right),\\
\mathbf{H}(\Bx;\omega)=\frac{e^{\mathrm{i}k_0\|\Bx\|}}{\|\Bx\|} \mathbf{H}^\infty(\hat{\Bx};\omega)+\mathcal{O}\left(\frac{1}{\|\Bx\|^2} \right),\\
\end{split}
\quad \quad ~~~~\|\Bx\|\rightarrow+\infty,
\end{equation}
where $k_0:=\omega\sqrt{\varepsilon_0\mu_0}$. By the Rellich theorem \cite{CK}, we know that the inverse problem \eqref{eq:geom3} is equivalent to the following one,
\begin{equation}\label{eq:geom2}
\mathbf{H}^\infty(\hat{\Bx};\omega)-\mathbf{H}_0^\infty(\hat{\Bx};\omega)\big|_{(\hat{\Bx},\omega)\in \hat{\Gamma}\times\mathbb{R}_+}\longrightarrow \bigcup_{l=1}^{l_0} B_l,
\end{equation}
where $\hat{\Gamma}:=\{\hat{\Bx}\in\mathbb{S}^2; \hat{\Bx}=\frac{\Bx}{\|\Bx\|},\ \Bx\in\Gamma\}\subset\mathbb{S}^2$ with $\mathbb{S}^2$ denoting the unit sphere. It can seen that all our subsequent results hold for \eqref{eq:geom2} with straightforward modifications. Hence, we stick to considering \eqref{eq:geom3} by assuming that $\Gamma$ is an open portion of $S_{R_0}$.

\subsection{Indicator functional and locating algorithm}
\label{subsec:ind}

We first consider the case with the full-aperture measurement data, namely the date on the whole sphere $S_{R_0}$. That is, we take $\Gamma=S_{R_0}$ in \eqref{eq:geom3}. We shall discuss the extensions to the limited-aperture case in Section~\ref{sect:limitedaperture}.  

We first construct an indicator functional as follows,
\begin{align}\label{eq:maininid01}
 I(\Bz):=\frac{2\|\mathbf P\|}{3R_0\|\mathbf{Q}-\mathbf Q_{\Bz}\|},
\end{align}
where $\Bz \in \Omega_s$ denotes the sampling point, and $\mathbf{P}, \mathbf{Q}$ and $\mathbf{Q}_{\mathbf{z}}$ are defined in terms of the measurement data. Next, we introduce each ingredient in \eqref{eq:maininid01}.  The vector $\mathbf{Q}\in \mathbb{C}^5$ is defined by
\begin{equation}\label{eq:Q}
\mathbf{Q}:=\int_{\mathbb{S}^{2}} (\hat{\mathbf{x}}\cdot \tilde{\mathbf H}(R_0\hat{\mathbf{x}})) \overline{\mathbf{Y}_2(\hat{\mathbf{x}})}  ~\mathrm{d} s_{\hat{\mathbf{x}}},
\end{equation}
which signifies a projection vector of $\hat{\mathbf{x}}\cdot \tilde{\mathbf H}(\mathbf{x})$ on the vector subspace
$$\mathbf{Y}_2=(Y_2^{-2}, Y_2^{-1}, Y_2^0, Y_2^1, Y_2^2)^T,$$
where $Y_n^m(\hat{\mathbf{x}}), m=-n,...,n, n=0,1,2,...$, denote the spherical harmonics. The vector $\mathbf{Q}_{\Bz}\in \mathbb{C}^5$ also signifies projection vector of $\hat{\mathbf{x}}\cdot \tilde{\mathbf H}(\mathbf{x})$ on $\mathbf{Y}_2$ with the location $\Bz_1$ replaced by a sampling point $\Bz$. To calculate $\mathbf{Q}_{\Bz}$, the vector $\mathbf P\in \mathbb{C}^3$ needs to be calculated, which is given by
\beq\label{eq:definP}
\mathbf{P}:=\int_{\mathbb{S}^{2}} (\hat{\mathbf{x}}\cdot \tilde{\mathbf H}(R_0\hat{\mathbf{x}}))\hat{\mathbf{x}} ds_{\hat{\mathbf{x}}}.
\eeq
The vector $\mathbf{P} $ is used to estimate the term $\mathbf{P}_1\mathbf{H}^{(0)}_0(\mathbf{z}_1)$ in \eqref{eq:lemag0101} (for more details, see Theorem \ref{th:P}).
Combined with our measurements, the corresponding projection coefficients of $\hat{\mathbf{x}}\cdot \tilde{\mathbf H}(R_0\hat{\mathbf{x}}) $ with the location $\Bz_1$ replaced by the sampling point $\Bz$ are given by
\begin{equation}\label{eq:Qz}
\mathbf Q_{\Bz}:=\frac{2}{3R_0}\mathbf T\mathbf{P},
\end{equation}
where $\mathbf T=(\widetilde{\mathbf T}_{-2},\widetilde{\mathbf T}_{-1},\widetilde{\mathbf T}_{0}, \widetilde{\mathbf T}_{1}, \widetilde{\mathbf T}_{2})^T
\in\mathbb C^{5\times 3}$, and
\begin{equation}\label{eq:T}
\widetilde{\mathbf T}_{m}(\mathbf z)=\sum_{h=-1}^{1}{\overline{Y_{1}^h(\hat{\mathbf{z}}})\|\bZ\|}
\int_{\mathbb{S}^{2}} \big(2Y_{1}^h(\hat{\mathbf{x}})\hat{\mathbf{x}}-\nabla_SY_{1}^{h}(\hat{\mathbf{x}})\big)\overline{Y_2^m(\hat{\mathbf{x}})} ~\mathrm{d} s_{\hat{\mathbf{x}}},
~~m=-2,...,2,
\end{equation}
where $\hat{\mathbf z}:=\mathbf z/\|\mathbf z\|$. Note that the vectors $\mathbf Q_{\Bz}$ and $\widetilde{\mathbf T}_{m}$, $m=1,2, 3, 4,5$ are presented in an explicit way. We shall explain the derivation of these vectors in Theorem \ref{th:T}.

\begin{rem}
 Since the spherical harmonics $Y_n^m, m=-n,...,n, n=0,1,2,...$, form a complete orthonormal system in $L^2(\mathbb{S}^{2})$, the expansion coefficients  of $\hat{\mathbf{x}}\cdot \tilde{\mathbf H}(\mathbf{x})$ with the location $\Bz_1$ replaced by the sampling point $\Bz$  are equivalent to that of $\hat{\mathbf{x}}\cdot \tilde{\mathbf H}(\mathbf{x})$  if and only if $\Bz=\Bz_1$. We need to choose a suitable subset of spherical harmonics  since space $L^2(\mathbb S^2)$ is  infinitely dimensional.
Here we choose a suitable subspace, that is $\{Y_2^{-2}, Y_2^{-1}, Y_2^0, Y_2^1, Y_2^2\}$. It is worth mentioning that for the subspace $\{Y_1^{-1}, Y_1^0, Y_1^1\}$,  it contains no information on the position of the anomaly.
\end{rem}

Next, we investigate the asymptotic property of the indicator functional $I(\Bz)$ to provide a theoretical basis of our algorithm. We have the following result:
\begin{thm}\label{thm:main3}
It holds that $I(\Bz)=\Ocal(\|\Bz_1-\Bz\|^{-1})$ with $\Bz \rightarrow \Bz_1$. Moreover, one has
\begin{equation}\label{eq:ib1}
I(\Bz)\leq \sqrt{\frac{4\pi}{3}} \bigg({\|{\mathbf z}_1-{\mathbf z}\|  \min_{\|\mathbf y\|=1}\max_{-2\leq m\leq 2}|\mathbf y^T\mathbf D_m \hat{\mathbf P}|}\bigg)^{-1}
\end{equation}
with
$\|{\mathbf z}_1-{\mathbf z}\| \gg 0$, where $\hat{\mathbf P}:=\mathbf P/\|\mathbf P\|$.
\end{thm}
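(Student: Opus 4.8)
The plan is to reduce the whole statement to a single lower bound on the denominator $\|\mathbf Q-\mathbf Q_{\Bz}\|$ of the indicator \eqref{eq:maininid01}. First I would use the low-frequency asymptotics of $\tilde{\mathbf H}$ together with Theorems~\ref{th:P} and~\ref{th:T} to record the leading-order representation of the true data projection, namely
\[
\mathbf Q=\frac{2}{3R_0}\,\mathbf T(\Bz_1)\,\mathbf P,
\]
where $\mathbf T(\Bz_1)$ is the matrix \eqref{eq:T} evaluated at the genuine location $\Bz_1$ and $\mathbf P$ serves as the estimate of the moment $\mathbf P_1\mathbf H^{(0)}_0(\Bz_1)$ from Theorem~\ref{th:P} (the neglected part being of higher order in $\delta$ and $\omega$). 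Since the definition \eqref{eq:Qz} reads $\mathbf Q_{\Bz}=\frac{2}{3R_0}\mathbf T(\Bz)\mathbf P$, subtraction yields the clean working identity $\mathbf Q-\mathbf Q_{\Bz}=\frac{2}{3R_0}\bigl(\mathbf T(\Bz_1)-\mathbf T(\Bz)\bigr)\mathbf P$, so that all the $\Bz$-dependence is now carried by the matrix difference $\mathbf T(\Bz_1)-\mathbf T(\Bz)$.

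The decisive structural observation is that each scalar $\widetilde{\mathbf T}_m(\Bz)\mathbf P$ is a \emph{linear} function of the vector $\Bz$. Indeed, the factor $\|\Bz\|\,\overline{Y_1^h(\hat{\Bz})}$ in \eqref{eq:T} is, up to the normalization constant $\sqrt{3/(4\pi)}$ of the degree-one harmonics, a degree-one solid harmonic, hence a linear form in the Cartesian components of $\Bz$; the accompanying integral $\int_{\mathbb S^2}\bigl(2Y_1^h\hat{\mathbf x}-\nabla_S Y_1^h\bigr)\overline{Y_2^m}\,\mathrm{d}s$ is a fixed vector independent of $\Bz$. Collecting these constants (together with the relative normalizations over $h=-1,0,1$) into $3\times 3$ matrices $\mathbf D_m$, one obtains $(\widetilde{\mathbf T}_m(\Bz_1)-\widetilde{\mathbf T}_m(\Bz))\mathbf P=\sqrt{3/(4\pi)}\,(\Bz_1-\Bz)^T\mathbf D_m\mathbf P$. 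Setting $\mathbf y:=(\Bz_1-\Bz)/\|\Bz_1-\Bz\|$ and $\hat{\mathbf P}=\mathbf P/\|\mathbf P\|$ then gives
\[
\|\mathbf Q-\mathbf Q_{\Bz}\|=\frac{2}{3R_0}\sqrt{\frac{3}{4\pi}}\,\|\mathbf P\|\,\|\Bz_1-\Bz\|\Bigl(\sum_{m=-2}^{2}\bigl|\mathbf y^T\mathbf D_m\hat{\mathbf P}\bigr|^2\Bigr)^{1/2}.
\]

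The blow-up claim is then immediate: as $\Bz\to\Bz_1$ the right-hand side vanishes to first order in $\|\Bz_1-\Bz\|$, while the numerator $2\|\mathbf P\|$ of \eqref{eq:maininid01} is fixed (and nonzero for a genuine anomaly), whence $I(\Bz)=\Ocal(\|\Bz_1-\Bz\|^{-1})$ and $I$ indeed blows up. For the quantitative estimate \eqref{eq:ib1} I would drop the inessential components via $(\sum_m|\cdot|^2)^{1/2}\ge\max_{-2\le m\le 2}|\cdot|$ and then pass to the worst direction, bounding $\max_m|\mathbf y^T\mathbf D_m\hat{\mathbf P}|\ge\min_{\|\mathbf v\|=1}\max_m|(\mathbf v)^T\mathbf D_m\hat{\mathbf P}|$. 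Inserting this lower bound on $\|\mathbf Q-\mathbf Q_{\Bz}\|$ into \eqref{eq:maininid01} cancels the common factors $\tfrac{2}{3R_0}$ and $\|\mathbf P\|$, while the surviving $\sqrt{3/(4\pi)}$ inverts to the prefactor $\sqrt{4\pi/3}$, which is exactly \eqref{eq:ib1}.

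The step I expect to be the main obstacle is the very first one: rigorously justifying the leading-order identity $\mathbf Q=\frac{2}{3R_0}\mathbf T(\Bz_1)\mathbf P$ and controlling the discarded remainder uniformly in the sampling point, which is precisely where the low-frequency expansion of $\tilde{\mathbf H}$ and the full content of Theorems~\ref{th:P} and~\ref{th:T} enter. A secondary but essential point is to verify that $\min_{\|\mathbf y\|=1}\max_m|\mathbf y^T\mathbf D_m\hat{\mathbf P}|>0$, so that \eqref{eq:ib1} is not vacuous; this nondegeneracy is exactly the reason, highlighted in the Remark, for projecting onto the degree-two block $\mathbf Y_2$, which genuinely encodes $\Bz_1$, rather than the degree-one block.
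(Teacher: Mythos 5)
Your proposal follows essentially the same route as the paper's own proof: you reduce $\mathbf Q-\mathbf Q_{\Bz}$ to the linear form $(\widetilde{\mathbf z}_1-\widetilde{\mathbf z})^T\mathbf D_m\mathbf P$ coming from the degree-one solid harmonics, use $\|\widetilde{\mathbf z}_1-\widetilde{\mathbf z}\|=\sqrt{3/(4\pi)}\,\|\Bz_1-\Bz\|$, and obtain both the blow-up rate and \eqref{eq:ib1} from the min--max lower bound whose positivity rests on the nondegeneracy of $\{\mathbf D_m\hat{\mathbf P}\}$ (Lemma~\ref{le:rank}). The only (harmless) cosmetic difference is that you package the five components into an exact Euclidean norm identity before dropping to the maximum, whereas the paper estimates the two directions separately.
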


From \eqref{eq:ib1}, one can readily see that the indicator functional blows up when the sampling point is  sufficiently close to the location of the anomaly and is bounded when the sampling point is located away from it. Based on the indicator functional \eqnref{eq:maininid01} and its behavior, we design the algorithm for locating the magnetized anomaly in Algorithm~\ref{alg:one}.

\begin{varalgorithm}{3.1}
\caption{Imaging scheme for locating magnetized anomalies }
\label{alg:one}
\begin{algorithmic}
\State{\textbf{step 1} \quad Set $N_1$, $N_2$ and $N_3$. Using \eqnref{eq:min} to extend $\tilde{\mathbf H}(\mathbf{x})$ from $\Gamma$ to $S_{R_0}$. }

\State{\textbf{step 2} \quad Calculate the integral $\mathbf{Q}=\int_{\mathbb{S}^{2}} (\hat{\mathbf{x}}\cdot \tilde{\mathbf H}(R_0\hat{\mathbf{x}}))\overline{\mathbf{Y}_2(\hat{\mathbf{x}})}  ~\mathrm{d} s_{\hat{\mathbf{x}}}.$}

\State{\textbf{step 3} \quad Calculate the integral $\mathbf{P}=\int_{\mathbb{S}^{2}} (\hat{\mathbf{x}}\cdot \tilde{\mathbf H}(R_0\hat{\mathbf{x}}))\hat{\mathbf{x}} ~\mathrm{d} s_{\hat{\mathbf{x}}}.$  }

\State{\textbf{step 4} \quad  For any sampling points $\mathbf{z}\in \Omega_s$ calculate the integrals $$\widetilde{\mathbf T}_{m}=\sum_{h=-1}^{1}{\overline{Y_{1}^h(\hat{\mathbf{z}}})\|\bZ\|}
\int_{\mathbb{S}^{2}} \big(2Y_{1}^h(\hat{\mathbf{x}})\hat{\mathbf{x}}-\nabla_SY_{1}^{h}(\hat{\mathbf{x}})\big)\overline{Y_2^m(\hat{\mathbf{x}})} ~\mathrm{d} s_{\hat{\mathbf{x}}}.$$ }

\State{\textbf{step 5} \quad Calculate the projection vector
$$\mathbf Q_{\Bz}=\frac{2}{3R_0}\mathbf T\mathbf{P}, \quad \mathbf T=(\widetilde{\mathbf T}_{-2},\widetilde{\mathbf T}_{-1},\widetilde{\mathbf T}_{0}, \widetilde{\mathbf T}_{1}, \widetilde{\mathbf T}_{2})^T.$$}

\State{\textbf{step 6} \quad Calculate the imaging functional
$$ I(\Bz)=\frac{2\|\mathbf P\|}{3R_0\|\mathbf{Q}-\mathbf Q_{\Bz}\|}.$$  }

\State{\textbf{step 7} \quad Plot the value of the normalized imaging functional in the spatial sampling grid
$$ \mathcal{I}(\Bz) =  \frac{I(\Bz) }{\max_{\Bz} I(\Bz) } .$$}
\end{algorithmic}
\end{varalgorithm}

It is noted that in our algorithm, we only use the data of the geomagnetic fields on the surface $\Gamma$ as well as certain inner product operations to construct the imaging functional for determining the location of the magnetized anomaly. Hence, it is computationally efficient and moreover it is robust against noises that are possibly involved. 

\subsection{Proof of Theorem~\ref{thm:main3}}
\label{subsec:proof}

In this subsection, we provide the proof of the main result in Section \ref{subsec:ind}, i.e. Theorem~\ref{thm:main3}. Before doing this, we first briefly summarize some results on the integral representations and asymptotic behaviors of the geomagnetic fields in our study, which shall be needed in justifying the indicating behavior in Theorem~\ref{thm:main3}. Moreover, we would like to emphasise that in Theorem~\ref{thm:main3}, we only consider the case with a single anomaly, namely $l_0=1$. Nevertheless, as remarked earlier, we shall consider the extension to the case with multiple anomalies in Section~\ref{sect:limitedaperture}. To that end, for some of results in this section, we also consider multiple anomalies if the corresponding results are true for such a case. 

First, in the absence of any anomaly in the Earth's shell, the geomagnetic field $\bH_0$ has the following integral ansatz:
\beq\label{eq:repre02}
\bH_0=\left\{\begin{split}
&-\mathrm{i}\omega^{-1}\Big(\nabla\times\nabla\times\Acal_{\Omega_c}^{k_0}[\Phi_C]+\nabla\times\nabla\times\Acal_{\Omega}^{k_0}[\Phi_S]+\\
& \hspace*{4cm}\omega^2\varepsilon_0\nabla\times\Acal_{\Omega}^{k_0}[\Psi_S]\Big) \ \mbox{in} \ \ \RR^3\setminus\overline{\Omega}, \\
&-\mathrm{i}\omega^{-1}\Big(\nabla\times\nabla\times\Acal_{\Omega_c}^{k_s}[\Phi_C]+\nabla\times\nabla\times\Acal_{\Omega}^{k_s}[\Phi_S]+\\
&\hspace*{4cm}\omega^2\varepsilon_s\nabla \times\Acal_{\Omega}^{k_s}[\Psi_S]\Big)\ \mbox{in} \ \ \Omega_s,
\end{split}
\right.
\eeq
where $(\Phi_S,\Psi_S, \Phi_C)\in \mathrm{TH}(\mbox{div},\p \Omega)\times \mathrm{TH}(\mbox{div},\p \Omega)\times \mathrm{TH}(\mbox{div},\p \Omega_c)$. The surface density functions $(\Phi_S,\Psi_S, \Phi_C)$ are (uniquely) determined by the boundary conditions on $\p\Omega_s$ as well as the transmission conditions across $\p \Omega$.  Let $\bH^{(0)}_0$  be the leading-order term  of $\bH_0$  with respect to frequency $\omega$, then it holds that
\beq\label{eq:le0101}
\bH_0=\bH_0^{(0)}+ \Ocal(\omega):=\nabla\Scal_{\Omega_c}^{0}\Big(\frac{I}{2}+(\Kcal_{\Omega_c}^{0})^*\Big)^{-1}[\nu\cdot\bH_0|_{\p \Omega_c}^{+}]+\Ocal(\omega)\ \ \rm{in} \ \ \RR^3\setminus\overline{\Omega_c}.
\eeq
Similarly, the geomagnetic field $\bH$ with the presence of the anomalies has following integral representation:
\beq\label{eq:solrepre02}
\bH=\left\{
\begin{split}
\hat\bH_0-\mathrm{i}\omega^{-1}\nabla\times\Big((\omega^2\varepsilon_0\Acal_{\Omega}^{k_0}[\Psi_0]+\nabla\times\Acal_{\Omega}^{k_0}[\Phi_0])&\\
+\sum_{l'=1}^{l_0}(\omega^2\varepsilon_0\Acal_{B_{l'}}^{k_0}[\Psi_{l'}]+\nabla\times\Acal_{B_{l'}}^{k_0}[\Phi_{l'}])\Big) &\quad \mbox{in} \ \ \RR^3\setminus\overline\Omega, \\
\hat\bH_0-\mathrm{i}\omega^{-1}\nabla\times\Big((\omega^2\varepsilon_s\Acal_{\Omega}^{k_s}[\Psi_0]+\nabla\times\Acal_{\Omega}^{k_s}[\Phi_0])&\\
+\sum_{l'=1}^{l_0}(\omega^2\varepsilon_s\Acal_{B_{l'}}^{k_s}[\Psi_{l'}]+\nabla\times\Acal_{B_{l'}}^{k_s}[\Phi_{l'}])\Big) &\quad \mbox{in} \ \ \tilde{B}, \\
-\mathrm{i}\omega^{-1}\nabla\times\Big((\omega^2\gamma_l\Acal_{\Omega}^{\varsigma_l}[\Psi_0]+\nabla\times\Acal_{\Omega}^{\varsigma_l}[\Phi_0])&\\
+\sum_{l'=1}^{l_0}(\omega^2\gamma_{l}\Acal_{B_{l'}}^{\varsigma_l}[\Psi_{l'}]+\nabla\times\Acal_{B_{l'}}^{\varsigma_l}[\Phi_{l'}])\Big) &\quad \mbox{in} \ \ B_l,
\end{split}
\right.
\eeq
where $\tilde{B}:=\Omega_s\setminus\overline{\bigcup_{l'=1}^{l_0}B_{l'}}$, $k_s:=\omega\sqrt{\mu_0\varepsilon_s}$, $(\Phi_0, \Psi_0)\in \rm{TH}({\rm div},\p \Omega)\times \rm{TH}({\rm div},\p \Omega)$ and $(\Phi_l, \Psi_l)\in \mathrm{TH}({\rm div}, \p B_l)\times\mathrm{TH}({\rm div}, \p B_l)$, $l=1, 2, \ldots, l_0$. In \eqref{eq:solrepre02}, the surface density functions are (uniquely) determined by the boundary conditions on $\p \Omega_s$ as well as the transmission conditions across $\p \Omega$ and $\p B_l$, $l=1, 2, \ldots, l_0$. Moreover, the field $\hat\bH_0$ has the following expansion:
 \beq\label{eq:leH1repre01}
\begin{split}
\bH_0&=\hat\bH_0+(\varepsilon_0-\varepsilon_s)\varepsilon_s^{-1}\sum_{l'=1}^{l_0}\nabla\Scal_{B_{l'}}^0[\nu_{l'}\cdot\hat\bH_0]+\Ocal(\omega) \ \ \mbox{in} \ \ (\RR^3\setminus\overline\Omega)\bigcup \tilde{B}.
\end{split}
 \eeq
Then $\bH$ equipped with the leading-order term $\bH^{(0)}$ has the following asymptotic expansions with respect to the frequency $\omega$ as $\omega\rightarrow 0^+$:
\beq\label{eq:th0101}
\bH=\bH^{(0)}+\Ocal(\omega):=\left\{
\begin{split}
\hat\bH_0&-\varepsilon_0\nabla\times\Acal_\Omega^0[\Xi]+\nabla\Scal_{\Omega}^0[\Theta]\\
&+\sum_{l'=1}^{l_0}\Big(\varepsilon_0\nabla\times\Acal_{B_{l'}}^{0}[\Psi_{l'}^{(0)}]-\mu_0\nabla\Scal_{B_{l'}}^{0}[\Pi_{l'}]\Big)+\Ocal(\omega) \quad \mbox{in} \ \ \RR^3\setminus\overline{\Omega}, \\
\hat\bH_0&-\varepsilon_s\nabla\times\Acal_\Omega^0[\Xi]+\nabla\Scal_{\Omega}^0[\Theta]\\
&+\sum_{l'=1}^{l_0}\Big(\varepsilon_s\nabla\times\Acal_{B_{l'}}^{0}[\Psi_{l'}^{(0)}]-\mu_0\nabla\Scal_{B_{l'}}^{0}[\Pi_{l'}]\Big)+\Ocal(\omega) \quad \mbox{in} \ \ \tilde{B}, \\
&-\gamma_l\nabla\times\Acal_\Omega^0[\Xi]-\nabla\Scal_{\Omega}^0[\Theta]\\
&+\sum_{l'=1}^{l_0}\Big(\gamma_l\nabla\times\Acal_{B_{l'}}^{0}[\Psi_{l'}^{(0)}]-\mu_0\nabla\Scal_{B_{l'}}^{0}[\Pi_{l'}]\Big)+\Ocal(\omega) \quad  \mbox{in} \ B_l,
\end{split}
\right.
\eeq
where $\Xi, \Theta\in {\rm TH}({\rm div}, \p \Omega)$ and $\Psi_l^{(0)}\in {\rm TH}({\rm div}, \p B_l)$, $l=1, 2,\ldots, l_0$.
$\Pi_l\in L^2(\p B_l)$, $l=1, 2,\ldots, l_0$ are defined by
\beq\label{eq:th0103}
\begin{split}
\Pi_l=&\Big((\mathbb{J}_B^{\mu})^{-1}\big[(\frac{\nu_1\cdot\hat\bH_0}{\mu_1-\mu_0}, \frac{\nu_2\cdot\hat\bH_0}{\mu_2-\mu_0}, \ldots, \frac{\nu_{l_0}\cdot\hat\bH_0}{\mu_{l_0}-\mu_0})^T\big]\Big)_l \\
&-\Big((\mathbb{J}_B^{\mu})^{-1}\big[(\frac{\omega\gamma_1\mu_1\nu_1\cdot\mathbf{C}}{\mu_1-\mu_0}, \frac{\omega\gamma_2\mu_2\nu_2\cdot\mathbf{C}}{\mu_2-\mu_0}, \ldots, \frac{\omega\gamma_{l_1}\mu_{l_0}\nu_{l_0}\cdot\mathbf{C}}{\mu_{l_0}-\mu_0})^T\big]\Big)_l,
\end{split}
\eeq
where  $\gamma_l:=\Ge_l+i\Gs_l/\omega$, $\mathbb{J}_B^{\mu}$ is defined on $L^2(\p B_1)\times L^2(\p B_2)\times\cdots\times L^2(\p B_{l_0})$ and given by
\beq\label{eq:app10}
\mathbb{J}_B^{\mu}:=\left(
\begin{array}{cccc}
\lambda_{\mu_1}I & 0 & \cdots & 0\\
0 & \lambda_{\mu_2}I & \cdots & 0\\
\vdots & \vdots &\ddots &\vdots \\
0 & 0 & \cdots & \lambda_{\mu_{l_0}}I
\end{array}
\right)-
\left(
\begin{array}{cccc}
(\Kcal_{B_1}^{0})^* & \nu_1\cdot\nabla\Scal_{B_2}^{0} & \cdots & \nu_1\cdot\nabla\Scal_{B_{l_0}}^{0} \\
\nu_2\cdot\nabla\Scal_{B_1}^{0} & (\Kcal_{B_2}^{0})^* & \cdots & \nu_2\cdot\nabla\Scal_{B_{l_0}}^{0}\\
\vdots & \vdots &\ddots &\vdots \\
\nu_{l_0}\cdot\nabla\Scal_{B_1}^{0} & \nu_{l_0}\cdot\nabla\Scal_{B_2}^{0} & \cdots & (\Kcal_{B_{l_0}}^{0})^*
\end{array}
\right),
\eeq
 and $\mathbf{C}$ is defined by
\beq\label{eq:th01add01}
\mathbf{C}:=\nabla\times\Acal_\Omega^0\Big(\Gl_{\varepsilon} I +\Mcal_{\Omega}^0\Big)^{-1}\sum_{l'=1}^{l_0}\Mcal_{\Omega, B_{l'}}^0[\Psi_{l'}^{(0)}]
-\nabla\times\sum_{l'=1}^{l_0}\Acal_{B_{l'}}^0[\Psi_{l'}^{(0)}].
\eeq
The parameters $\Gl_{\mu_l}$ and $\Gl_{\gamma_l}$ are defined by
\beq\label{eq:defmulam}
\lambda_{\mu_l}:=\frac{\mu_l+\mu_0}{2(\mu_l-\mu_0)}, \quad \lambda_{\gamma_l}:=\frac{\gamma_l+\varepsilon_s}{2(\gamma_l-\varepsilon_s)}, \quad l=1, 2, \ldots, l_0.
\eeq

For more details on the above results, we refer to \cite{DLL:19}. By gazing at the geomagnetic field $\bH$ as a perturbed field of $\bH_0$, with the aid of the integral representation of the geomagnetic fields $\bH_0$ and $\bH$, the mapping properties of the boundary integral operators as well as the asymptotic analysis with respect to the size $\delta$, we can arrive at the following result, which is summarized from \cite{DLL:19}. 
\begin{lem}\label{le:maggra01}
Suppose $B_l$, $l=1, 2 ,\ldots, l_0$ are defined in \eqnref{eq:permeab02} with $\delta\in\mathbb{R}_+$ sufficiently small. Then  there holds the following asymptotic expansion:
\beq\label{eq:lemag0101}
\bH^{(0)}(\Bx)=
\bH_0^{(0)}(\Bx)+\delta^3\sum_{l=1}^{l_0}\nabla\big(\nabla\Gamma_0(\Bx-\Bz_l)^T\mathbf{P}_l\bH_0^{(0)}(\Bz_l)\big)+\Ocal(\delta^4),\quad \Bx\in \RR^3\setminus\Omega,
\eeq
where $\mathbf{P}_l$ is a $3\times 3$ matrix defined by
\beq\label{eq:matM01}
\mathbf{P}_l:=\mu_0\mathbf{M}_l-\varepsilon_0\mathbf{B}_l-\mathbf{P}_0, l=1, 2, \ldots, l_0.
\eeq
$\mathbf{P}_0$ is defined by
\beq\label{eq:leasymdefP0}
\mathbf{P}_0:=\int_{\p \Omega}\tilde\By(\lambda_{\varepsilon} I-(\Kcal_{\Omega}^{0})^*)^{-1}[\nu_l]ds_{\tilde\By},
\eeq
 and the polarization tensors $\mathbf{B}_l$ and $\mathbf{M}_l$ are $3\times 3$ matrices defined by
\beq\label{eq:leasym02}
\mathbf{B}_l:=\frac{1}{\gamma_l-\varepsilon_s}\frac{\varepsilon_s}{\varepsilon_s-\varepsilon_0}\int_{\p \Omega}\tilde\By(\lambda_{\gamma_l} I+(\Kcal_{\Omega}^{0})^*)^{-1}(\lambda_{\varepsilon} I-(\Kcal_{\Omega}^{0})^*)^{-1}[\nu_l] ~\mathrm{d} s_{\tilde\By},
\eeq
and
\beq\label{eq:leasym03}
\mathbf{M}_l:=\frac{1}{\mu_l-\mu_0}\frac{\varepsilon_s}{\varepsilon_s-\varepsilon_0}\int_{\p \Omega}\tilde\By(\lambda_{\mu_l} I-(\Kcal_{\Omega}^{0})^*)^{-1}(\lambda_{\varepsilon} I-(\Kcal_{\Omega}^{0})^*)^{-1}[\nu_l] ~\mathrm{d} s_{\tilde\By},
\eeq
respectively, $l=1, 2, \ldots, l_0$, where
$$
\Gl_{\varepsilon}:=\frac{\varepsilon_s+\varepsilon_0}{2(\varepsilon_s-\varepsilon_0)}.
$$
\end{lem}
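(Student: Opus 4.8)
The plan is to regard $\bH^{(0)}$ as a perturbation of $\bH_0^{(0)}$ and to extract, from the difference of their low-frequency representations, the leading contribution in the size parameter $\delta$ of \eqref{eq:permeab02}. Concretely, I would subtract the anomaly-free leading term \eqref{eq:le0101} from the representation \eqref{eq:th0101} for $\bH^{(0)}$ in $\RR^3\setminus\overline\Omega$, using \eqref{eq:leH1repre01} to relate $\hat\bH_0$ to $\bH_0$. After this cancellation the only surviving terms are those generated by the anomalies: the single-layer contributions $-\mu_0\nabla\Scal_{B_{l'}}^0[\Pi_{l'}]$ and the vectorial terms $\varepsilon_0\nabla\times\Acal_{B_{l'}}^0[\Psi_{l'}^{(0)}]$ over the small boundaries $\p B_{l'}$, together with the corrections $\Xi,\Theta$ to the densities on $\p\Omega$, which are themselves driven by the inclusions through $\mathbf{C}$ in \eqref{eq:th01add01}. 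The remaining task is an asymptotic analysis of each of these pieces as $\delta\to0^+$.

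Next I would introduce the two-scale change of variables dictated by \eqref{eq:permeab02}. Writing $\By=\Bz_l+\delta\boldsymbol\eta$ with $\boldsymbol\eta\in\p U$ and $ds_\By=\delta^2\,ds_{\boldsymbol\eta}$, and setting the rescaled density $\tilde\varphi(\boldsymbol\eta):=\varphi(\Bz_l+\delta\boldsymbol\eta)$, every layer potential over $\p B_l$ evaluated at a fixed $\Bx\in\RR^3\setminus\overline\Omega$ (so that $\|\Bx-\Bz_l\|\gg\delta$) can be expanded by Taylor-expanding $\Gamma_0(\Bx-\Bz_l-\delta\boldsymbol\eta)$ about $\Bz_l$. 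The monopole term is proportional to $\Gamma_0(\Bx-\Bz_l)\int_{\p U}\tilde\varphi\,ds$; since the relevant densities carry no net monopole (consistent with $\nabla\cdot(\mu\bH)=0$), this term vanishes, and the leading survivor is the dipole term $-\delta^3\,\nabla\Gamma_0(\Bx-\Bz_l)\cdot\Bp_l$ with $\Bp_l:=\int_{\p U}\boldsymbol\eta\,\tilde\varphi\,ds_{\boldsymbol\eta}$. Applying the outer gradient coming from $\nabla\Scal$ (respectively the curl from $\nabla\times\Acal$) then produces exactly the Hessian-type kernel $\nabla\big(\nabla\Gamma_0(\Bx-\Bz_l)^T\Bp_l\big)$ appearing in \eqref{eq:lemag0101}, with the neglected quadratic Taylor terms giving the remainder $\Ocal(\delta^4)$.

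It then remains to identify the dipole moments, i.e. to show that $\Bp_l$ is linear in $\bH_0^{(0)}(\Bz_l)$ with matrix coefficient $\mathbf{P}_l$. For this I would expand the defining integral equation for $\Pi_l$ in \eqref{eq:th0103}, and the corresponding equation for $\Psi_l^{(0)}$, in the rescaled variables. Over the small inclusion the driving trace $\nu_l\cdot\hat\bH_0^{(0)}$ is, to leading order, the constant vector $\nu_l\cdot\bH_0^{(0)}(\Bz_l)$, since $\hat\bH_0^{(0)}$ is smooth near $\Bz_l$ and differs from $\bH_0^{(0)}$ only at higher order in $\delta$. Because the operators $(\Kcal_{B_l}^0)^*$ and the blocks of $\mathbb{J}_B^\mu$ in \eqref{eq:app10} are scale-invariant, the leading rescaled densities solve size-independent integral equations on $\p U$; integrating their solution operators against $\boldsymbol\eta$ and against the trial field $\tilde\By$ reproduces precisely the polarization integrals $\mathbf{M}_l$ and $\mathbf{B}_l$ of \eqref{eq:leasym03} and \eqref{eq:leasym02}, weighted by $\mu_0$ and $\varepsilon_0$ respectively. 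The interaction of the inclusion with the shell--core interface, transmitted through $\mathbf{C}$ and the $\p\Omega$ densities $\Xi,\Theta$, contributes the additive term $-\mathbf{P}_0$ of \eqref{eq:leasymdefP0}. Collecting these three contributions yields $\mathbf{P}_l=\mu_0\mathbf{M}_l-\varepsilon_0\mathbf{B}_l-\mathbf{P}_0$, as claimed in \eqref{eq:matM01}.

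The main obstacle is the coupling: the densities on $\p B_l$ and on $\p\Omega$ are determined simultaneously by the transmission conditions, so the inclusion problem cannot be solved in isolation. The delicate step is to decouple them at leading order in $\delta$, showing that the feedback from the inclusions onto $\p\Omega$ and back enters the dipole moment only through the $\mathbf{P}_0$ correction, while all further coupling is relegated to $\Ocal(\delta^4)$. This requires uniform-in-$\delta$ invertibility and mapping bounds for the coupled boundary integral operators, which I would obtain from the invertibility of $\lambda_\varepsilon I-(\Kcal_\Omega^0)^*$, $\lambda_{\mu_l}I-(\Kcal_\Omega^0)^*$ and $\lambda_{\gamma_l}I+(\Kcal_\Omega^0)^*$ on the relevant trace spaces, exactly as established in \cite{DLL:19}; the separation hypothesis $\Bx-\Bz_l\gg\delta$ for $\Bx\in\p\Omega_s$ guarantees that inter-anomaly interactions are likewise of higher order and do not affect the stated leading term.
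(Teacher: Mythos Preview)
The paper does not give its own proof of this lemma: it is quoted as a result ``summarized from \cite{DLL:19}'', with only the one-sentence indication that one views $\bH$ as a perturbation of $\bH_0$, uses the integral representations \eqref{eq:repre02}--\eqref{eq:th01add01}, the mapping properties of the boundary integral operators, and small-$\delta$ asymptotics. Your proposal follows precisely this template---subtracting \eqref{eq:le0101} from \eqref{eq:th0101}, rescaling via \eqref{eq:permeab02}, Taylor-expanding $\Gamma_0$ to extract the dipole term, and decoupling the $\p B_l$/$\p\Omega$ systems at leading order to produce the polarization matrices $\mathbf{M}_l,\mathbf{B}_l,\mathbf{P}_0$---so it is the same approach as the paper (and \cite{DLL:19}) describes.
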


This is a key result which characterizes the variation of the geomagnetic field due to the presence of the anomalies. In particular, we can see that the change of the geomagnetic field caused by the presence of magnetized anomalies is a gradient field. 

We proceed to consider the proof of Theorem~\ref{thm:main3} and switch our discussion to the case with $l_0=1$ for simplicity. We recall the following property concerning the fundamental solution $\Gamma_0$. 

\begin{lem}\cite{DLL:19}\label{le:04}
Let $\bZ\in \RR^3$ be fixed,  $\Bx\in S_R$, and suppose $\|\bZ\|<R$. Then there holds the following expansion:
\beq\label{eq:leasm02}
\nabla \Gamma_0(\Bx-\bZ)=\sum_{n=0}^{\infty}\sum_{m=-n}^{n}
\frac{(n+1)Y_n^m(\hat\Bx)\hat\Bx-\nabla_SY_n^m(\hat\Bx)}{(2n+1)R^{n+2}}\overline{Y_n^m(\hat{\bZ})}\|\bZ\|^{n},
\eeq
\end{lem}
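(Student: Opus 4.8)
The plan is to recognise \eqref{eq:leasm02} as a gradient of the classical multipole (Laplace) expansion of the Newtonian potential, and then to carry out a termwise differentiation in spherical coordinates. Since $k=0$ in \eqref{Gk}, the static fundamental solution is $\Gamma_0(\Bx)=-\tfrac{1}{4\pi\|\Bx\|}$, so that $\Gamma_0(\Bx-\bZ)=-\tfrac{1}{4\pi\|\Bx-\bZ\|}$ is, up to the constant $-1/4\pi$, the Newtonian potential with source at $\bZ$. First I would invoke the Laplace expansion (the generating function for Legendre polynomials), which is valid precisely in the regime $\|\bZ\|<\|\Bx\|=R$ assumed in the lemma:
\[
\frac{1}{\|\Bx-\bZ\|}=\sum_{n=0}^{\infty}\frac{\|\bZ\|^n}{\|\Bx\|^{n+1}}\,P_n(\hat\Bx\cdot\hat\bZ),
\]
where $P_n$ is the degree-$n$ Legendre polynomial. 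Applying the addition theorem for spherical harmonics, $P_n(\hat\Bx\cdot\hat\bZ)=\tfrac{4\pi}{2n+1}\sum_{m=-n}^{n}Y_n^m(\hat\Bx)\overline{Y_n^m(\hat\bZ)}$, the factor $4\pi$ cancels against $1/4\pi$ and yields the scalar expansion
\[
\Gamma_0(\Bx-\bZ)=-\sum_{n=0}^{\infty}\sum_{m=-n}^{n}\frac{1}{2n+1}\frac{\|\bZ\|^n}{\|\Bx\|^{n+1}}\,Y_n^m(\hat\Bx)\,\overline{Y_n^m(\hat\bZ)}.
\]

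Next I would differentiate termwise in the variable $\Bx$. Writing the gradient in spherical coordinates as $\nabla=\hat\Bx\,\partial_r+r^{-1}\nabla_S$, with $r=\|\Bx\|$ and $\nabla_S$ the surface gradient on the unit sphere acting on the angular variable $\hat\Bx$, each building block $Y_n^m(\hat\Bx)/r^{n+1}$ satisfies
\[
\nabla\!\left(\frac{Y_n^m(\hat\Bx)}{r^{n+1}}\right)=-\frac{(n+1)Y_n^m(\hat\Bx)\,\hat\Bx-\nabla_SY_n^m(\hat\Bx)}{r^{n+2}},
\]
because $\partial_r\big(r^{-(n+1)}\big)=-(n+1)r^{-(n+2)}$ supplies the radial term while the angular factor contributes $r^{-(n+2)}\nabla_SY_n^m(\hat\Bx)$. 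Substituting this identity into the scalar expansion, the two minus signs cancel, and evaluating at $r=R$ on the sphere $S_R$ produces exactly the claimed formula \eqref{eq:leasm02}, with coefficient $\dfrac{(n+1)Y_n^m(\hat\Bx)\hat\Bx-\nabla_SY_n^m(\hat\Bx)}{(2n+1)R^{n+2}}\,\overline{Y_n^m(\hat\bZ)}\,\|\bZ\|^n$.

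The only delicate point is the justification of differentiation under the summation sign; all the remaining identities (the Laplace expansion, the addition theorem, and the spherical decomposition of $\nabla$) are classical. Since $\|\bZ\|<R$, I would use the standard growth bounds $|Y_n^m(\hat\Bx)|\lesssim n$ and $\|\nabla_SY_n^m(\hat\Bx)\|\lesssim n^{3/2}$ together with the geometric decay $(\|\bZ\|/R)^n$ to show that both the series and its formal gradient converge absolutely and uniformly for $\Bx$ in a neighbourhood of $S_R$, so that termwise differentiation is legitimate. This is the main (though routine) obstacle; the computation is otherwise elementary, and the result may alternatively be quoted directly from \cite{DLL:19}.
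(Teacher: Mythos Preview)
Your derivation is correct and is precisely the standard route to this identity: Laplace/multipole expansion of $\tfrac{1}{\|\Bx-\bZ\|}$, the addition theorem for spherical harmonics, and termwise application of the spherical decomposition $\nabla=\hat\Bx\,\partial_r+r^{-1}\nabla_S$. The sign bookkeeping and the convergence argument are both handled properly.

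There is nothing to compare against: in the present paper this lemma is not proved at all but simply quoted from \cite{DLL:19}, so your argument supplies a proof where the paper gives none. One minor remark: the pointwise bound you cite for $Y_n^m$ can be sharpened to $|Y_n^m(\hat\Bx)|\le\sqrt{(2n+1)/4\pi}$ via the addition theorem with $\hat\Bx=\hat\bZ$, but any polynomial-in-$n$ bound suffices here since the geometric factor $(\|\bZ\|/R)^n$ controls the tail.
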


Note that the matrix $\mathbf{P}_1$ in \eqref{eq:matM01} has a rather complicated expression and we do not have any measurement data on the shell. Hence, we need to estimate  $\mathbf{P}_1\mathbf{H}^{(0)}_0(\mathbf{z}_1)$ using the measurement data. Using Lemma \ref{le:04}, one can obtain $\mathbf{P}_1\mathbf{H}^{(0)}_0(\mathbf{z}_1)$ by using the full data $\tilde{\mathbf H}(\mathbf x)$ on $S_{R_0}$.

\begin{thm}\label{th:P}
Suppose $\mathbf{x}\in S_{R_0}$ is outside the surface of the Earth. Then there holds the following asymptotic expansion:
\begin{equation}\label{eq:PH0}
\delta^3\mathbf{P}_1\bH_0^{(0)}(\Bz_1)=-6R_0^3 \mathbf{P}+\Ocal(\delta^4)+\Ocal(\omega),
\end{equation}
where $\mathbf{P}$ is given in (\ref{eq:definP}).
\end{thm}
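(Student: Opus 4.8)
The plan is to feed the single–anomaly asymptotics of Lemma~\ref{le:maggra01} into the definition \eqref{eq:definP} of $\mathbf P$ and then to evaluate the resulting surface integrals by spherical–harmonic orthogonality. First I would abbreviate $\mathbf{a}:=\mathbf P_1\bH_0^{(0)}(\mathbf z_1)\in\mathbb{C}^3$. Since $\bH=\bH^{(0)}+\Ocal(\omega)$ and $\bH_0=\bH_0^{(0)}+\Ocal(\omega)$, specialising \eqref{eq:lemag0101} to $l_0=1$ gives, for $\mathbf x\in S_{R_0}$,
\[
\tilde{\mathbf H}(\mathbf x)=\delta^3\nabla\big(\nabla\Gamma_0(\mathbf x-\mathbf z_1)^T\mathbf a\big)+\Ocal(\delta^4)+\Ocal(\omega)=\delta^3\nabla\big(\mathbf a\cdot\nabla\Gamma_0(\mathbf x-\mathbf z_1)\big)+\Ocal(\delta^4)+\Ocal(\omega).
\]
Hence, to leading order, the field difference is the gradient of the scalar $g(\mathbf x):=\mathbf a\cdot\nabla\Gamma_0(\mathbf x-\mathbf z_1)$, and its radial part on $S_{R_0}$ is $\hat{\mathbf x}\cdot\tilde{\mathbf H}(R_0\hat{\mathbf x})=\delta^3\,\partial_r g\big|_{r=R_0}+\Ocal(\delta^4)+\Ocal(\omega)$, using $\hat{\mathbf x}\cdot\nabla=\partial_r$.

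Next I would expand $\nabla\Gamma_0$ by Lemma~\ref{le:04}, which writes $g$ as a series in $Y_n^m(\hat{\mathbf x})$ with radial factors $r^{-(n+2)}$ (valid for all $\|\mathbf x\|>\|\mathbf z_1\|$, so differentiation in $r$ is legitimate). Applying $\partial_r$ and setting $r=R_0$ yields
\[
\hat{\mathbf x}\cdot\tilde{\mathbf H}(R_0\hat{\mathbf x})=-\delta^3\sum_{n,m}\frac{(n+2)\|\mathbf z_1\|^{n}}{(2n+1)R_0^{\,n+3}}\,\overline{Y_n^m(\hat{\mathbf z}_1)}\Big[(n+1)Y_n^m(\hat{\mathbf x})(\mathbf a\cdot\hat{\mathbf x})-\mathbf a\cdot\nabla_S Y_n^m(\hat{\mathbf x})\Big]+\Ocal(\delta^4)+\Ocal(\omega).
\]
Substituting into $\mathbf P=\int_{\mathbb S^2}(\hat{\mathbf x}\cdot\tilde{\mathbf H}(R_0\hat{\mathbf x}))\hat{\mathbf x}\,ds$ reduces the proof to the vector integrals $\mathbf I_{n,m}:=\int_{\mathbb S^2}\big[(n+1)Y_n^m(\mathbf a\cdot\hat{\mathbf x})-\mathbf a\cdot\nabla_S Y_n^m\big]\hat{\mathbf x}\,ds$. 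I would evaluate these from three ingredients: the moment $\int_{\mathbb S^2}\hat x_i\hat x_j\,ds=\tfrac{4\pi}{3}\delta_{ij}$; a boundary-free integration by parts on $\mathbb S^2$ combined with $\nabla_S\!\cdot\nabla_S(\mathbf a\cdot\hat{\mathbf x})=\Delta_{\mathbb S^2}(\mathbf a\cdot\hat{\mathbf x})=-2(\mathbf a\cdot\hat{\mathbf x})$, which converts the surface-gradient term into $\int_{\mathbb S^2}(\mathbf a\cdot\nabla_S Y_n^m)\hat{\mathbf x}\,ds=-\int_{\mathbb S^2}Y_n^m\big[\mathbf a-3(\mathbf a\cdot\hat{\mathbf x})\hat{\mathbf x}\big]ds$; and degree counting. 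The outcome is the compact identity $\mathbf I_{n,m}=(n-2)\,\mathbf J_{n,m}+\sqrt{4\pi}\,\mathbf a\,\delta_{n0}\delta_{m0}$, where $\mathbf J_{n,m}:=\int_{\mathbb S^2}Y_n^m(\mathbf a\cdot\hat{\mathbf x})\hat{\mathbf x}\,ds$. Because $\hat x_i\hat x_j$ carries only harmonic degrees $0$ and $2$, $\mathbf J_{n,m}$ is nonzero only for $n\in\{0,2\}$; in particular the degree-one modes give $\mathbf I_{1,m}=(-1)\mathbf J_{1,m}=0$, which is precisely the content of the Remark that the $Y_1$-subspace carries no positional information.

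Finally I would collect orders in $1/R_0$: the $n=0$ mode contributes at order $R_0^{-3}$, whereas the $n=2$ modes are $\Ocal(\|\mathbf z_1\|^2/R_0^{5})$ and the remaining modes decay faster, so all of them are absorbed into the error terms once $R_0\gg\|\mathbf z_1\|$. Using $Y_0^0=(4\pi)^{-1/2}$ and $\mathbf I_{0,0}=\tfrac{1}{3}\sqrt{4\pi}\,\mathbf a$, the $n=0$ term produces $\mathbf P=c\,\delta^3R_0^{-3}\mathbf a+\Ocal(\delta^4)+\Ocal(\omega)$ for an explicit numerical constant $c$ fixed by this evaluation, and rearranging gives \eqref{eq:PH0}. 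I expect the main obstacle to be the middle step, namely reducing the surface-gradient integral $\int_{\mathbb S^2}(\mathbf a\cdot\nabla_S Y_n^m)\hat{\mathbf x}\,ds$ correctly (the integration by parts must be carried out on the tangential part $\mathbf a-(\mathbf a\cdot\hat{\mathbf x})\hat{\mathbf x}$, using $\Delta_{\mathbb S^2}Y_1=-2Y_1$), and then showing that the entire $n\ge 2$ tail—not merely each individual term—is genuinely subleading; this last point needs a uniform estimate of the series exploiting $\|\mathbf z_1\|<R_0$.
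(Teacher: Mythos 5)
Your route is the same as the paper's, only written out in much more detail: substitute the dipole asymptotics of Lemma~\ref{le:maggra01} into \eqref{eq:definP}, replace $\hat{\mathbf x}\cdot\nabla$ by $\partial_r$, expand $\nabla\Gamma_0$ via Lemma~\ref{le:04}, and kill all but one spherical-harmonic mode by orthogonality. The paper compresses the last step into the phrase ``symmetry of the spherical integral'' together with $\int_{\mathbb S^2}\Ncal_n^m Y_{n'}^{m'}\,ds=\mathbf 0$ for $n\neq n'$; your explicit evaluation of $\mathbf I_{n,m}$, including the surface integration by parts with $\Delta_{\mathbb{S}^2}(\mathbf a\cdot\hat{\mathbf x})=-2(\mathbf a\cdot\hat{\mathbf x})$, is a genuine improvement in rigor and your identity $\mathbf I_{n,m}=(n-2)\mathbf J_{n,m}+\sqrt{4\pi}\,\mathbf a\,\delta_{n0}\delta_{m0}$ is correct.

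The one step that would fail as written is your final disposal of the $n\ge 2$ modes. You propose to absorb them into the error ``once $R_0\gg\|\mathbf z_1\|$,'' but $R_0$ is a \emph{fixed} measurement radius here, so a nonvanishing $n=2$ contribution of size $\delta^3\|\mathbf z_1\|^2R_0^{-5}$ is an $\Ocal(\delta^3)$ term and cannot be hidden in $\Ocal(\delta^4)+\Ocal(\omega)$; the theorem would then be false as stated. Fortunately the fix is already in your own formulas: since $\mathbf J_{n,m}=0$ for $n\notin\{0,2\}$ and the prefactor $(n-2)$ annihilates $n=2$, you have $\mathbf I_{n,m}=\mathbf 0$ for \emph{every} $n\ge 1$, so the entire tail vanishes identically (no uniform tail estimate is needed beyond convergence of the multipole series for $\|\mathbf z_1\|<R_0$). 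You should state this exact vanishing instead of the decay argument; it is precisely the content of the paper's orthogonality claim. Finally, carry your constants through: with $\mathbf I_{0,0}=\tfrac{1}{3}\sqrt{4\pi}\,\mathbf a$ and the $n=0$ coefficient $-2R_0^{-3}\overline{Y_0^0}$ you obtain $\mathbf P=-\tfrac{2}{3}\delta^3R_0^{-3}\mathbf P_1\mathbf H_0^{(0)}(\mathbf z_1)+\Ocal(\delta^4)+\Ocal(\omega)$, i.e. $c=-2/3$, whereas \eqref{eq:PH0} asserts $c=-1/6$; one of the two constants needs rechecking (the discrepancy is a fixed factor and does not affect the qualitative behaviour of the indicator, but it does affect the literal statement you are asked to prove).
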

\begin{proof}
By using \eqref{eq:lemag0101}, \eqref{eq:le0101} and \eqref{eq:th0101}, we can obtain
\begin{equation}\label{eq:P}
\begin{split}
\mathbf{P}:&=\int_{\mathbb{S}^{2}} (\hat{\mathbf{x}}\cdot \tilde{\mathbf H}(R_0\hat{\mathbf{x}}))\hat{\mathbf{x}} ~\mathrm{d} s_{\hat{\mathbf{x}}} \\
&=\int_{\mathbb{S}^{2}} \big(\hat{\mathbf{x}}\cdot\delta^3\nabla\big(\nabla\Gamma_0(R_0\hat{\mathbf{x}}-\Bz_1)^T\mathbf{P}_1\bH_0^{(0)}(\Bz_1)\big)\big)\hat{\mathbf{x}}  ~\mathrm{d} s_{\hat{\mathbf{x}}}+\Ocal(\delta^4)+\Ocal(\omega)\\
&=\delta^3\int_{\mathbb{S}^{2}} \big(\frac{\p}{\p r}\big(\nabla\Gamma_0(R_0\hat{\mathbf{x}}-\Bz_1)^T\mathbf{P}_1\bH_0^{(0)}(\Bz_1)\big)\big)\hat{\mathbf{x}} ~\mathrm{d} s_{\hat{\mathbf{x}}}+\Ocal(\delta^4)+\Ocal(\omega)\\
&=-\frac{\delta^3}{6R_0^3}\mathbf{P}_1\bH_0^{(0)}(\Bz_1)+\Ocal(\delta^4)+\Ocal(\omega),
\end{split}
\end{equation}
where the last equality can be obtained by using the symmetry of spherical integral and the fact that
\begin{equation*}
\int_{\mathbb{S}^{2}} \Ncal_n^mY_{n'}^{m'} ~\mathrm{d} s=\mathbf{0}, n\neq n'.
\end{equation*}
This readily completes the proof.
\end{proof}

By using the asymptotic behavior of the fundamental solution, one can also see that the term $\hat{\mathbf{x}}\cdot \tilde{\mathbf H}(\mathbf{x})$ has orthogonal expansion with respect to the spherical harmonics outside the surface of the Earth.
\begin{thm} \label{th:T}
Suppose $\|\mathbf{x}\|=R$. Then there holds the following orthogonal expansion with respect to spherical harmonics $Y_n^m$:
\begin{equation}\label{eq:XI0Y}
\hat{\mathbf{x}}\cdot \tilde{\mathbf H}(\mathbf{x})\approx-6R_0^3\sum_{n=1}^{\infty}\sum_{m=-n}^{n} \mathbf T_{n,m}^T\mathbf{P}Y_n^m(\hat\Bx),~~
\mathbf{x}\in\RR^3\setminus\overline\Omega,
\end{equation}
where
\begin{equation}\label{eq:T}
\mathbf T_{n,m}=\sum_{h=-(n-1)}^{n-1}\frac{\overline{Y_{n-1}^h(\hat{\mathbf{z}}_1})\|\bZ_1\|^{n-1}}{(2n-1)(-n-1)R^{n+2}}
\int_{\mathbb{S}^{2}} \big(nY_{n-1}^h(\hat{\mathbf{x}})\hat{\mathbf{x}}-\nabla_SY_{n-1}^{h}(\hat{\mathbf{x}})\big)\overline{Y_n^m(\hat{\mathbf{x}}}) ~\mathrm{d} s_{\hat{\mathbf{x}}}, n\geq 1.
\end{equation}
\end{thm}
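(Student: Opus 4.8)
The plan is to reduce $\hat{\mathbf x}\cdot\tilde{\mathbf H}$ on the sphere $\|\mathbf x\|=R$ to a radial derivative of a single-multipole potential and then read off its spherical-harmonic coefficients. First I would specialize \eqref{eq:lemag0101} to $l_0=1$, which gives $\tilde{\mathbf H}(\mathbf x)=\delta^3\nabla\big(\nabla\Gamma_0(\mathbf x-\mathbf z_1)^T\mathbf P_1\bH_0^{(0)}(\mathbf z_1)\big)+\Ocal(\delta^4)+\Ocal(\omega)$. Writing $\mathbf b:=\mathbf P_1\bH_0^{(0)}(\mathbf z_1)$, the leading part is $\nabla g$ with scalar potential $g(\mathbf x)=\delta^3\,\nabla\Gamma_0(\mathbf x-\mathbf z_1)\cdot\mathbf b$. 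Since the surface gradient is tangential, on $\|\mathbf x\|=R$ one has $\hat{\mathbf x}\cdot\nabla g=\partial_r g$, so it suffices to compute the radial derivative of $g$.

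Next I would insert the multipole expansion \eqref{eq:leasm02} of Lemma~\ref{le:04} for $\nabla\Gamma_0(\mathbf x-\mathbf z_1)$, noting that its $r$-dependence is carried entirely by the factor $r^{-(n+2)}$ while the vector fields $(n+1)Y_n^m(\hat{\mathbf x})\hat{\mathbf x}-\nabla_S Y_n^m(\hat{\mathbf x})$ are purely angular. Dotting with the constant vector $\mathbf b$ and applying $\partial_r$ then produces an explicit series for $\hat{\mathbf x}\cdot\tilde{\mathbf H}$ indexed by $(n,m)$, in which $\partial_r r^{-(n+2)}=-(n+2)r^{-(n+3)}$ replaces $r^{-(n+2)}$. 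Finally I would use Theorem~\ref{th:P}, i.e. \eqref{eq:PH0}, to eliminate the unknown quantity $\delta^3\mathbf b$ in favour of the measured vector $\mathbf P$ via $\delta^3\mathbf P_1\bH_0^{(0)}(\mathbf z_1)=-6R_0^3\mathbf P+\Ocal(\delta^4)+\Ocal(\omega)$, so that every coefficient is expressed through $\mathbf P$ alone.

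The key structural step, and the main obstacle, is to explain why this single angular sum reorganizes into the claimed double sum over $Y_n^m$ with the \emph{index-shifted} coefficient $\mathbf T_{n,m}$. The point is that each field $\mathbf V_{n,m}(\hat{\mathbf x}):=(n+1)Y_n^m(\hat{\mathbf x})\hat{\mathbf x}-\nabla_S Y_n^m(\hat{\mathbf x})$ is, up to sign, the restriction to the unit sphere of $\nabla\big(r^{-(n+1)}Y_n^m\big)$. Because $r^{-(n+1)}Y_n^m$ is harmonic and homogeneous of degree $-(n+1)$, each Cartesian component of its gradient is harmonic and homogeneous of degree $-(n+2)$, hence equals $r^{-(n+2)}$ times a spherical harmonic of degree $n+1$. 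Consequently $\mathbf V_{n,m}(\hat{\mathbf x})\cdot\mathbf P$ is a \emph{pure} degree-$(n+1)$ spherical harmonic, so its $L^2(\mathbb S^2)$-projection against $\overline{Y_{n'}^{m'}}$ vanishes unless $n'=n+1$. This orthogonality collapses the sum: the contribution to the $Y_n^m$-coefficient comes solely from the running multipole degree $n-1$, summed over $h=-(n-1),\dots,n-1$, which is precisely the structure appearing in \eqref{eq:T}.

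To finish, I would project the resulting series onto each $Y_n^m$ using completeness and orthonormality of $\{Y_n^m\}$ in $L^2(\mathbb S^2)$, substitute $n-1$ for the running degree, and collect the angular integral $\int_{\mathbb S^2}\big(nY_{n-1}^h(\hat{\mathbf x})\hat{\mathbf x}-\nabla_S Y_{n-1}^h(\hat{\mathbf x})\big)\overline{Y_n^m(\hat{\mathbf x})}\,\mathrm ds_{\hat{\mathbf x}}$ together with the scalar prefactors $\overline{Y_{n-1}^h(\hat{\mathbf z}_1)}\,\|\mathbf z_1\|^{n-1}$ and the power of $R$, and the overall constant supplied by \eqref{eq:PH0}. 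Matching these against the definition of $\mathbf T_{n,m}$ delivers the expansion, with the symbol $\approx$ absorbing the $\Ocal(\delta^4)+\Ocal(\omega)$ remainders. I expect the delicate part of this last step to be the careful bookkeeping of the numerical constants (the factors $n+1$, $2n-1$, and the powers of $R_0$ and $R$) through the radial differentiation and the substitution from Theorem~\ref{th:P}; the geometric content, however, rests entirely on the degree-raising and orthogonality property established above.
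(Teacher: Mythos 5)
Your proposal is correct and follows essentially the same route as the paper: substitute the dipole asymptotics \eqref{eq:lemag0101}, identify $\hat{\mathbf{x}}\cdot\nabla$ with $\partial_r$ on the sphere, eliminate $\delta^3\mathbf{P}_1\bH_0^{(0)}(\Bz_1)$ via Theorem~\ref{th:P}, and project onto the $Y_n^m$. The only difference is that where the paper cites Proposition~2.2 of \cite{DLL:20} for the final orthogonality/index-shift step, you prove it directly by observing that $(n+1)Y_n^m\hat{\mathbf{x}}-\nabla_S Y_n^m$ is (up to sign) the restriction of the gradient of the exterior harmonic $r^{-(n+1)}Y_n^m$ and hence a pure degree-$(n+1)$ spherical harmonic — a correct and self-contained replacement for that citation.
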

\begin{proof}
By using \eqref{eq:lemag0101}, \eqref{eq:le0101}, \eqref{eq:th0101} and \eqref{eq:PH0}, we can obtain for $\mathbf{x}\in\RR^3\setminus\overline\Omega$ that
\begin{equation}
\begin{split}
&~~~~\int_{\mathbb{S}^{2}} (\hat{\mathbf{x}}\cdot \tilde{\mathbf H}(R\hat{\mathbf{x}}))\overline{Y_n^m(\hat{\mathbf{x}})} ~\mathrm{d} s_{\hat{\mathbf{x}}} \\
&=\int_{\mathbb{S}^{2}} \big(\hat{\mathbf{x}}\cdot\delta^3\nabla\big(\nabla\Gamma_0(R\hat{\mathbf{x}}-\Bz_1)^T\mathbf{P}_1\bH_0^{(0)}(\Bz_1)\big)\big)\overline{Y_n^m(\hat{\mathbf{x}})} ~\mathrm{d} s_{\hat{\mathbf{x}}}+\Ocal(\delta^4)+\Ocal(\omega)\\
&=\delta^3\int_{\mathbb{S}^{2}} \big(\frac{\p}{\p r}\big(\nabla\Gamma_0(R\hat{\mathbf{x}}-\Bz_1)^T\mathbf{P}_1\bH_0^{(0)}(\Bz_1)\big)\big)\overline{Y_n^m(\hat{\mathbf{x}})} ~\mathrm{d} s_{\hat{\mathbf{x}}}+\Ocal(\delta^4)+\Ocal(\omega)\\
&\approx -6R_0^3\int_{\mathbb{S}^{2}} \big(\frac{\p}{\p r}\big(\nabla\Gamma_0(R\hat{\mathbf{x}}-\Bz_1)^T\mathbf{P}\big)\big)\overline{Y_n^m(\hat{\mathbf{x}})} ~\mathrm{d} s_{\hat{\mathbf{x}}}\\
&=-6R_0^3\int_{\mathbb{S}^{2}} \big(\frac{\p}{\p r}\big(\nabla\Gamma_0(R\hat{\mathbf{x}}-\Bz_1)^T\overline{Y_n^m(\hat{\mathbf{x}})}\big)\big)\mathbf{P} ~\mathrm{d} s_{\hat{\mathbf{x}}}\\
&=-6R_0^3\sum_{n=0}^{\infty}\sum_{m=-n}^{n} \mathbf T_{n,m}^T\mathbf{P},\\
\end{split}
\end{equation}
where the last equality can be obtained by using Proposition $2.2$ in \cite{DLL:20}. This readily completes the proof. 
\end{proof}

Hence, the information of the unknown position $\Bz_1$ only exists in the coefficients of the orthogonal expansion of $\hat{\mathbf{x}}\cdot \tilde{\mathbf H}(\mathbf{x})$ with respect to the spherical harmonics. More specifically, it is only contained in $\mathbf{T}_n^m$. This observation suggests that we use the orthogonal expansion coefficients to construct the indicator functional in reconstructing the location of the magnetized anomaly.

To simplify the analysis of the indicator functional, we next present the matrix representation of $\Ncal_2^{m}$ with respect to the spherical harmonics of order $1$. By straightforward calculations, one can obtain the following formula:
\begin{equation}
\Ncal_2^{m}(\hat{\mathbf{x}})=-\nabla_{S}Y_{1}^{m}(\hat{\mathbf{x}}) + 2Y_{1}^{m}(\hat{\mathbf{x}}) \hat{\mathbf{x}}=\mathbf N^{(m)} \mathbf Y_2, ~~m=-1,0,1,
\end{equation}
where

\begin{eqnarray*}
\begin{split}
&\mathbf N^{(-1)}=(n_{ij}^{(-1)})=\begin{bmatrix}
\frac{3\sqrt{5}}{5} & 0 &  -\frac{\sqrt{30}}{10} & 0 & 0 \\
\frac{3\sqrt{5}}{5}\mathrm{i} & 0 &  \frac{\sqrt{30}}{10}\mathrm{i} & 0 & 0 \\
0 & \frac{3\sqrt{5}}{5} &  0 & 0 & 0 \\
\end{bmatrix},\\
&\mathbf N^{(0)}=(n_{ij}^{(0)})=\begin{bmatrix}
0 & \frac{3\sqrt{10}}{10} &  0 & -\frac{3\sqrt{10}}{10} & 0 \\
0 & \frac{3\sqrt{10}}{10}\mathrm{i} &  0 & \frac{3\sqrt{10}}{10}\mathrm{i} & 0 \\
0 & 0 &  \frac{2\sqrt{15}}{5} & 0 & 0 \\
\end{bmatrix},\\
&\mathbf N^{(1)}=(n_{ij}^{(1)})=\begin{bmatrix}
0 & 0 &\frac{\sqrt{30}}{10} & 0 &  -\frac{3\sqrt{5}}{5}  \\
0 & 0 &\frac{\sqrt{30}}{10}\mathrm{i} & 0 &  \frac{3\sqrt{5}}{5}\mathrm{i}  \\
0 & 0 &  0 & \frac{3\sqrt{5}}{5} & 0 \\
\end{bmatrix}.\\
\end{split}
\end{eqnarray*}
Therefore, by combing the above results, we can obtain that
\begin{equation}
\widetilde{\mathbf T}_m^T \mathbf P=\widetilde{\mathbf z}^T \mathbf D_m \mathbf P,
\end{equation}
where
$\mathbf D_m=(d_{ij}^{(m)})\in \mathbb{C}^{3\times 3}$, $d_{ij}^{(m)}=n_{j,m+3}^{(i-2)}$, $\widetilde{\mathbf z}=(\widetilde{z}_1,\widetilde{z}_2,\widetilde{z}_3)^T \in \mathbb{C}^3$ with $\widetilde{z}_i= {\overline{Y_{1}^{i-2}(\hat{\mathbf{z}}})\|\bZ\|}$, $m=-2,-1,0,1,2;~ i,j=1,2,3$.
We have the following key observation:
\begin{lem}\label{le:rank}
For a fixed  nonzero vector $\mathbf a\in \mathbb C^3$, 
\begin{equation}\label{eq:rank}
\mathrm{rank} [\mathbf D_{-2}\mathbf a, \mathbf D_{-1}\mathbf a,\mathbf D_{0}\mathbf a,\mathbf D_{1}\mathbf a,\mathbf D_{2}\mathbf a]=3.
\end{equation}
\end{lem}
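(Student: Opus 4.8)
The plan is to make all five matrices $\mathbf{D}_m$ explicit, apply them to a generic $\mathbf{a}$, and then reduce the rank assertion to the nonvanishing of one conveniently chosen $3\times 3$ minor in each of three exhaustive cases. First I would read off the $\mathbf{D}_m$ from the index relation $d^{(m)}_{ij}=n^{(i-2)}_{j,m+3}$: for each fixed $m$ this amounts to taking the $(m+3)$-th column of each of $\mathbf{N}^{(-1)},\mathbf{N}^{(0)},\mathbf{N}^{(1)}$ and installing these as the three rows of $\mathbf{D}_m$. Because the $\mathbf{N}^{(p)}$ are highly sparse, the resulting $\mathbf{D}_m$ each have only a few nonzero entries.

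To expose the structure cleanly, I would pass to the linear combinations $u:=a_1+\mathrm{i}a_2$, $v:=-a_1+\mathrm{i}a_2$ and $w:=a_3$ of the entries of $\mathbf{a}=(a_1,a_2,a_3)^T$. Since $(u,v,w)$ is the image of $(a_1,a_2,a_3)$ under an invertible linear map, one has $\mathbf{a}\neq\mathbf{0}$ if and only if $(u,v,w)\neq(0,0,0)$; this equivalence is exactly what guarantees exhaustiveness of the case split below. A direct computation (with the abbreviations $c_1=\tfrac{3\sqrt5}{5}$, $c_2=\tfrac{3\sqrt{10}}{10}$, $c_3=\tfrac{\sqrt{30}}{10}$, $c_4=\tfrac{2\sqrt{15}}{5}$) yields the staircase pattern
\begin{align*}
\mathbf{D}_{-2}\mathbf{a}&=(c_1u,0,0)^T, & \mathbf{D}_{-1}\mathbf{a}&=(c_1w,c_2u,0)^T, & \mathbf{D}_0\mathbf{a}&=(c_3v,c_4w,c_3u)^T,\\
\mathbf{D}_{1}\mathbf{a}&=(0,c_2v,c_1w)^T, & \mathbf{D}_{2}\mathbf{a}&=(0,0,c_1v)^T.
\end{align*}

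Next I would argue by cases on which of $u,v,w$ is nonzero, in each case selecting three of the five columns $\mathbf{D}_m\mathbf{a}$ so that the associated $3\times 3$ matrix is triangular and hence trivially evaluated. If $u\neq0$, the columns $\mathbf{D}_{-2}\mathbf{a},\mathbf{D}_{-1}\mathbf{a},\mathbf{D}_0\mathbf{a}$ form an upper-triangular matrix with determinant $c_1c_2c_3\,u^3\neq0$. If $v\neq0$, the columns $\mathbf{D}_0\mathbf{a},\mathbf{D}_1\mathbf{a},\mathbf{D}_2\mathbf{a}$ form a lower-triangular matrix with determinant $c_1c_2c_3\,v^3\neq0$. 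The only remaining possibility is $u=v=0$, which by the equivalence above forces $w\neq0$; then $\mathbf{D}_{-1}\mathbf{a},\mathbf{D}_0\mathbf{a},\mathbf{D}_1\mathbf{a}$ reduce to $\mathrm{diag}(c_1w,c_4w,c_1w)$ with determinant $c_1^2c_4\,w^3\neq0$. In every case some $3\times3$ minor of $[\mathbf{D}_{-2}\mathbf{a},\ldots,\mathbf{D}_2\mathbf{a}]$ is nonzero, so the rank equals $3$, as claimed in \eqref{eq:rank}.

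As for difficulty, there is no genuine obstacle: the entire content is the algebraic bookkeeping of assembling the $\mathbf{D}_m$ from the columns of the $\mathbf{N}^{(p)}$ and recognizing the staircase form of $\mathbf{D}_m\mathbf{a}$. The one place that warrants care is the exhaustiveness of the case analysis — that $u=v=0$ cannot coexist with $w=0$ for a nonzero $\mathbf{a}$ — and this is precisely the invertibility of the substitution $(a_1,a_2,a_3)\mapsto(u,v,w)$, so it requires no extra work.
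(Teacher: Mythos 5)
Your proof is correct and takes the same route as the paper, whose entire proof is the one-line assertion that the claim ``can be verified by straightforward computations of the terms $\mathbf D_m\mathbf a$''; you have simply carried that computation out in full. I checked your expressions $\mathbf D_{-2}\mathbf a=(c_1u,0,0)^T,\ldots,\mathbf D_{2}\mathbf a=(0,0,c_1v)^T$ against the paper's definitions $d_{ij}^{(m)}=n_{j,m+3}^{(i-2)}$ and the matrices $\mathbf N^{(p)}$, and the staircase form, the three triangular minors, and the exhaustiveness of the case split (via invertibility of $(a_1,a_2,a_3)\mapsto(u,v,w)$, determinant $2\mathrm{i}$) are all correct.
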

\begin{proof}
The statement can be verified by straightforward computations of the terms $\mathbf D_m\mathbf a, m=-2,-1,0,1,2$.
\end{proof}

Based on the above preparations, we are in a position to present the proof of Theorem \ref{thm:main3} as follows. 

\begin{proof}
With the help of the spherical harmonics of order $1$, for $m=-2,-1,0,1,2$ and $\Bz \rightarrow \Bz_1$, we have
\begin{equation*}
|\widetilde{\mathbf z}_1^T \mathbf D_m \hat{\mathbf P}-\widetilde{\mathbf z}^T \mathbf D_m \hat{\mathbf P}|\leq \sum_{i=1}^{3}|({\overline{Y_{1}^{i-2}(\hat{\mathbf{z}}_1})\|\bZ_1\|}-{\overline{Y_{1}^{i-2}(\hat{\mathbf{z}}})\|\bZ\|})(\mathbf D_m \hat{\mathbf P})_i|=\Ocal(\|\Bz_1- \Bz\|_1),
\end{equation*}
where  $\widetilde{\mathbf z}_1=(\widetilde{z}_1^{(1)},\widetilde{z}_2^{(1)},\widetilde{z}_3^{(1)})^T \in \mathbb{C}^3$ with $\widetilde{z}_i^{(1)}= {\overline{Y_{1}^{i-2}(\hat{\mathbf{z}}_1})\|\bZ_1\|}$. Hence
\begin{equation*}
\frac{3R_0\|\mathbf{Q}-\mathbf Q_{\Bz}\|}{2\|\mathbf P\|}=\Ocal(\|\Bz_1- \Bz\|_1)=\Ocal(\|\Bz_1- \Bz\|). 
\end{equation*}
That is,  $I(\Bz)=\Ocal(\|\Bz_1-\Bz\|^{-1})$ with $\Bz \rightarrow \Bz_1$.

On the other hand,
\begin{equation*}
\frac{3R_0\|\mathbf{Q}-\mathbf Q_{\Bz}\|}{2\|\mathbf P\|}\geq \max_{-2\leq m\leq 2}|(\widetilde{\mathbf z}_1-\widetilde{\mathbf z})^T \mathbf D_m \hat{\mathbf P}| \geq \|\widetilde{\mathbf z}_1-\widetilde{\mathbf z}\| \min_{\|\mathbf y\|=1}\max_{-2\leq m\leq 2}|\mathbf y^T\mathbf D_m \hat{\mathbf P}|.
\end{equation*}
By direct calculations, one can obtain that
\begin{equation*}
 \|\widetilde{\mathbf z}_1-\widetilde{\mathbf z}\|= \sqrt{\frac{3}{4\pi}} \|{\mathbf z}_1-{\mathbf z}\|. 
\end{equation*}
Moreover, using Lemma \ref{le:rank}, we have
\begin{equation*}
\min_{\|\mathbf y\|=1}\max_{-2\leq m\leq 2}|\mathbf y^T\mathbf D_m \hat{\mathbf P}|> 0.
\end{equation*}
Hence, by combining the estimates above we have for $\|{\mathbf z}_1-{\mathbf z}\| \gg 0$ that
\begin{equation*}
I(\Bz) \leq \sqrt{\frac{4\pi}{3}} \bigg({\|{\mathbf z}_1-{\mathbf z}\|  \min_{\|\mathbf y\|=1}\max_{-2\leq m\leq 2}|\mathbf y^T\mathbf D_m \hat{\mathbf P}|}\bigg)^{-1}. 
\end{equation*}

The proof is complete.
\end{proof}

\subsection{Two extensions}\label{sect:limitedaperture}

In this subsection, we consider two extensions of the results in the previous subsections to a more general and practical scenario. First, we consider the case with multiple anomalies, namely $l_0>1$ in \eqref{eq:geom3}. In such a case, we assume that the multiple anomalies $B_l$ are well separated, i.e. $\mathrm{dist}(B_l, B_{l'})\gg 1$ when $l\neq l'$. In such a case, the multiple scattering effect among different anomalous components are negligible. Hence, Theorem~\ref{thm:main3} can be directly extend to the case when $\mathbf{z}_1$ is replaced by any $\mathbf{z}_l$, $l=1,2,\ldots,l_0$. Therefore, Algorithm~\ref{alg:one} can be directly extended to recover multiple well-separated magnetized anomalies without any modification. On the other hand, as can be seen from our numerical experiments in Section~\ref{sec:experiments} in what follows, our method still works well even when the anomalous components are reasonably close. The main justification of this kind of performance is due to that we are mainly concerned with the qualitative locating of the positions of the anomalies. 

The other extension is about the limited-aperture measurement data; that is, in \eqref{eq:geom3}, the measurement data is only given on $\Gamma$ which is a proper open subset of $S_{R_0}$. There are two means to tackle such a situation. First, one can simply replace the integral domain $\mathbb{S}^2$ in \eqref{eq:maininid01}--\eqref{eq:T} in defining the imaging functional by $\Gamma$. In Section~\ref{sec:experiments}, our numerical experiments show that our scheme can still locate multiple anomalies if $\Gamma$ is not too small. The other approach is to extend the data of the geomagnetic field from the partial surface $\Gamma$ to the whole sphere, which is referred to as the data extension or the data interpolation.
To that end, we recall that the vectorial spherical harmonics of order $n$ are given by
\begin{equation}
\begin{split}
  \Ical_n^m(\hat{\mathbf{x}})=&\nabla_{S}Y_{n+1}^m(\hat{\mathbf{x}}) +(n+1) Y_{n+1}^m(\hat{\mathbf{x}}) \hat{\mathbf{x}}, \quad n\geq 0,  n+1\geq m \geq -(n+1),\\
  \Tcal_n^m(\hat{\mathbf{x}})=&\nabla_{S}Y_{n}^m(\hat{\mathbf{x}}) \times \hat{\mathbf{x}}, \quad  n\geq 1,  n\geq m \geq -n,\\
  \Ncal_n^m(\hat{\mathbf{x}})=&-\nabla_{S}Y_{n-1}^m(\hat{\mathbf{x}}) + nY_{n-1}^m(\hat{\mathbf{x}}) \hat{\mathbf{x}}, \quad n\geq1,  n-1\geq m \geq -(n-1),\\
\end{split}
\end{equation}
which form a complete orthogonal basis of $(L^2(\mathbb{S}^2))^3$. Based on this result, we can approximate $\tilde{\mathbf{H}}$ by using the vectorial spherical harmonics as follows
\begin{equation}
\tilde{\mathbf{H}}(R_0\hat{\mathbf{x}})=\big(\sum_{n=0}^{N_1}\sum_{m=-n}^{n}\alpha_n^m\Ical_n^m+\sum_{n=1}^{N_2}\sum_{m=-n}^{n}\beta_n^m\Tcal_n^m
+\sum_{n=1}^{N_3}\sum_{m=-n}^{n}\rho_n^m\Ncal_n^m\big)(\hat{\mathbf{x}}), \hat\Bx=\Bx/\|\Bx\| \in \mathbb{S}^2,
\end{equation}
where $N_1,N_2,N_3\in \mathbb{N}$ and $\alpha_n^m,\beta_n^m,\rho_n^m\in \mathbb C$ are the solutions to the following optimization problem:
\begin{equation}\label{eq:min}
\min_{\alpha_n^m,\beta_n^m,\rho_n^m\in \mathbb C} \Big\| \tilde{\mathbf{H}}-\big(\sum_{n=0}^{N_1}\sum_{m=-n}^{n}\alpha_n^m\Ical_n^m+\sum_{n=1}^{N_2}\sum_{m=-n}^{n}\beta_n^m\Tcal_n^m+\sum_{n=1}^{N_3}\sum_{m=-n}^{n}\rho_n^m\Ncal_n^m\big)
\Big\|_{\hat\Gamma}.
\end{equation}

\begin{rem}
Theoretically, the algorithm developed on partial data is the same as the algorithm developed on full data due to the analyticity of $\tilde{\mathbf{H}}$ on $S_{R_0}$. However, they differ numerically due to the ill-posedness of the inverse problem \eqref{eq:geom3} and the fact that the data extension \eqref{eq:min} shall induce errors to the measurement data.  Nevertheless, as discussed at the end of Section~\ref{subsec:ind}, our reconstruction scheme is robust against noises and hence can overcome this data extension problem. This shall be further corroborated by our numerical experiments in what follows. 
\end{rem}

\section{Numerical experiments and discussions}
\label{sec:experiments}

In this section, we carry out a series of numerical experiments for different benchmark problems to illustrate the salient features of our proposed locating Algorithm~\ref{alg:one}.   The results achieved are consistent with our theoretical predictions in Section~\ref{sec:main} in a sound manner. Besides, the numerical results reveal some very promising potential of the imaging schemes that were not covered in our theoretical analysis and worth further investigation in the future.

The numerical experiments are divided into three groups.
\begin{itemize}
    \item Reconstructions of a single magnetic anomaly.
    \item Reconstructions of multiple magnetic anomalies.
    \item Reconstructions of an extended L-shaped magnetic anomaly.
\end{itemize}

Two types of magnetic anomalies will be considered  in our numerical experiments. They are given by revolving bodies through rotating  in the $x$-$y$ plane around the $x$-axis two 2D boundaries parameterized as follows:
\begin{eqnarray*}
{Peanut:} &\quad & \{ (x,y) : x=\sqrt{3 \cos^2 (s) + 1}\cos(s), \  y=\sqrt{3 \cos^2 (s) + 1}\sin(s), \  0\le s\le 2\pi \},\\
{Kite:} &\quad & \{ (x,y) : x=\cos(s)+0.65\cos (2s)-0.65, \  y=1.5\sin(s), \  0\le s\le 2\pi \}.
\end{eqnarray*}

In our simulations below, the Earth $\Omega$ is simplified to be a unit sphere  with radius $1$ and the Earth's core $\Omega_c$  is a concentric sphere with radius $0.5$. We set $\varepsilon_{c}=4$, $\mu_{c}=10$ and $\sigma_{1}=1$ within the core, $\varepsilon_{s}=1$, $\mu_{0}=1$ within the shell, and $\varepsilon_{l}=2$, $\mu_{l}=4$ and $\sigma_{l}=1$ within the magnetic anomalies. Our measurement data are obtained by solving the Maxwell system \eqref{eq:pss01} using a quadratic
$H(\mathrm{curl})$-conforming edge element discretization in a spherical domain centered at the origin and holding inside the Earth and all the  magnetic anomalies and we deal with the divergence-free constraint using the delta regularization technique (cf.~\cite{Duan12}).  The computational domain is enclosed
by a PML layer to damp the reflection. Local adaptive
refinement scheme   is adopted to
enhance the accuracy of the outgoing scattered wave.  The data
are measured on the the surface of the concentric sphere of radium $R_0=7$, that is $S_{R_0}$ covers the geostationary orbits of the Earth.  The measurement points are taken to be the spherical Lebedev quadrature nodal points on $S_{R_0}$ (cf.\!\cite{Leb99}).
We refine the mesh successively
till the relative maximum error of successive groups of measurement
data is below $0.01\%$.  The exact measurement data
$\bH(\Bx)$ are corrupted point-wise by the formula
\begin{equation}
\bH^{\beta}(\Bx) = \bH(\Bx) + \beta\zeta_1\underset{\Bx}{\max} |\bH(\Bx)|\exp(\mathrm{i} 2\pi
\zeta_2)\,,
\end{equation}
where $\beta$ refers to the relative noise level, and both  $\zeta_1$ and $\zeta_2$ follow the
uniform distribution ranging from $-1$ to $1$. The values of the indicator functions have  been
normalized between $0$ and $1$ to highlight the positions
identified.
The iron liquid within the core are assumed to be constantly rotating along the $z$-axis and thus generating background magnetic field without anomalies in the $y$-$z$ plane as shown in Figure~\ref{fig:background_field}.

\begin{figure}
    \centering
    \includegraphics[width=0.4\textwidth]{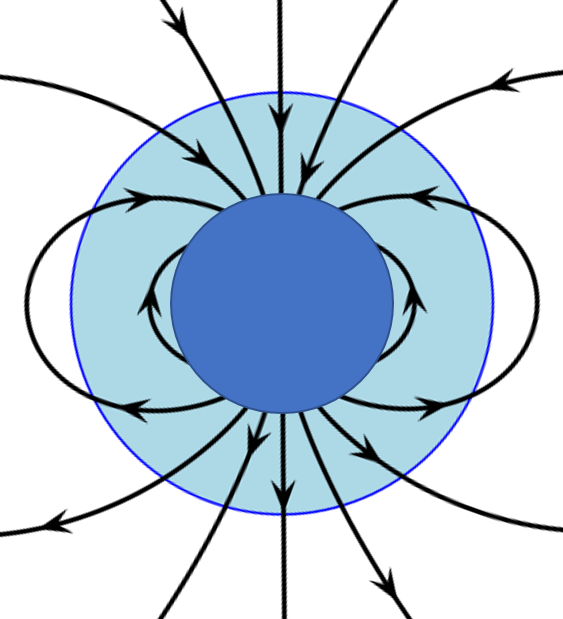}
    \caption{Background magnetic field without magnetic anomalies.}
    \label{fig:background_field}
\end{figure}

\subsection*{Example 1: Single magnetic anomaly}

In this example, we try to identify a single small  magnetic anomaly. We first place a kite magnetic anomaly, scaled by ratio $\delta=0.02$, in the middle between the surfaces of the Earth and its core. From Figure~\ref{fig:singlea}, our locating method works well as predicted and highlights the position of the magnetic anomaly through the normalized indicator function value. The noise level $\beta$ can be increased to 20\% without significantly deteriorating the blow-up behavior around the position in the image, which shows the robustness of our method.

Next, we put a peanut magnetic anomaly, scaled by ratio $\delta=0.01$,  attached to the Earth's surface. The position of the magnetic anomaly still can be correctly retrieved by  the measurement data corrected by $\beta=10\%$ through our method, which even break the limit of the assumption that the anomalies should be deep beneath the surface, see Figure~\ref{fig:singleb}. This shows the potential capability of identifying the magnetic anomaly within the oceans and mountains on the surface of the Earth by employing the new method.

\begin{figure}[!htb]
	\includegraphics[width=0.4\textwidth]{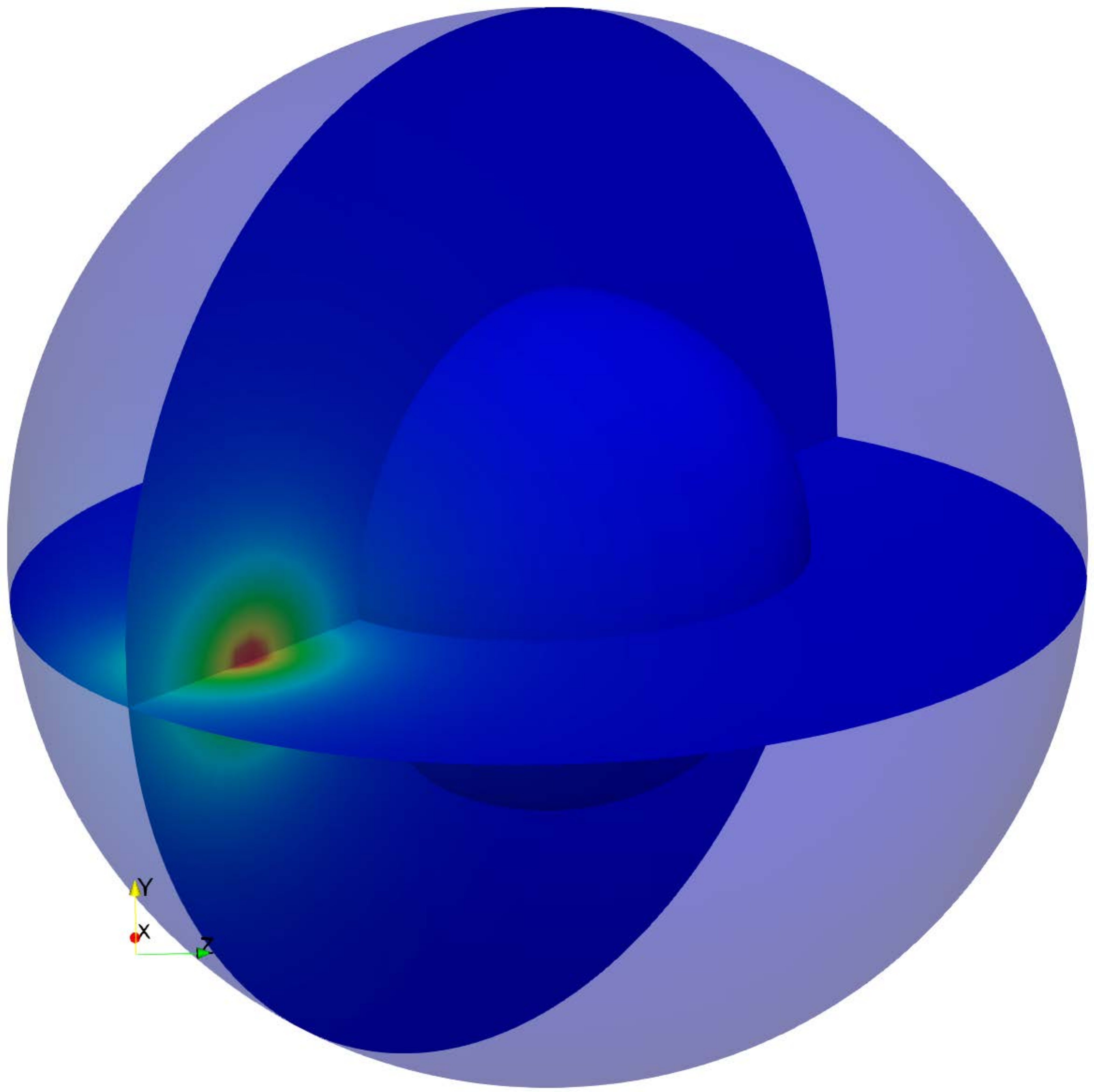}
	\caption{Example 1(a), one small kite magnetic anomaly within the shell of the Earth.} 
	\label{fig:singlea} 
\end{figure}

\begin{figure}[!htb]
	\includegraphics[width=0.4\textwidth]{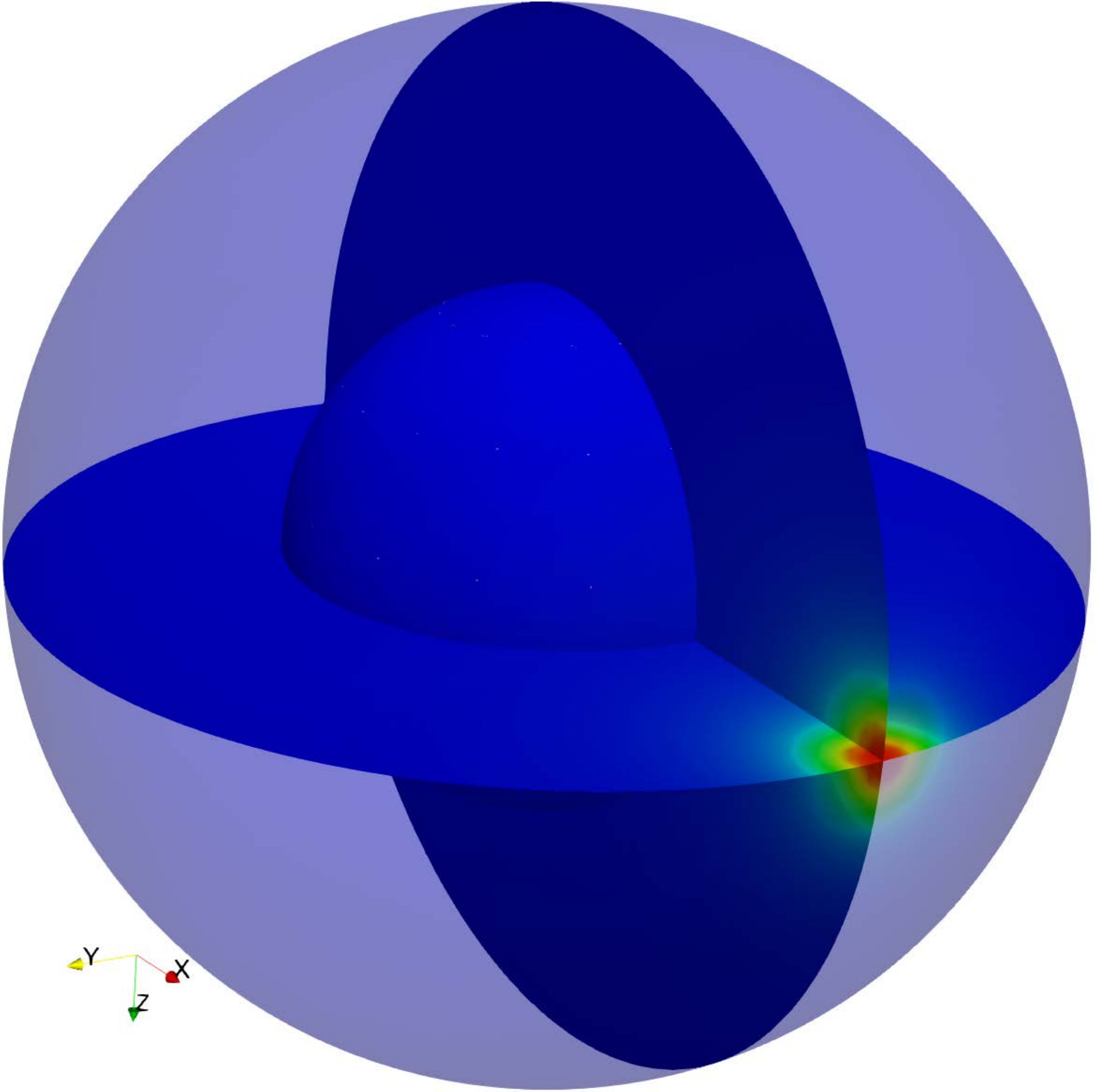}
	\caption{Example 1(b): one small peanut magnetic anomaly beneath the surface of the Earth.}
	\label{fig:singleb} 
\end{figure}

\subsection*{Example 2: Multiple magnetic anomalies}

Four kite-shaped magnetic anomalies, all scaled by $\delta=0.02$, are placed at  well-separated
four positions at different depth within the Earth's shell.
This example verifies that our imaging method can capture multiple magnetic anomalies. The
indicating slice plot of the locating method using measurement
data   are shown in Figure~\ref{fig:multiplea}, which enables one to
identify the locations of all magnetic anomalies even if
the measured data is significantly perturbed with $\beta=10\%$ .

\begin{figure}[!htb]
	\includegraphics[width=.4\textwidth]{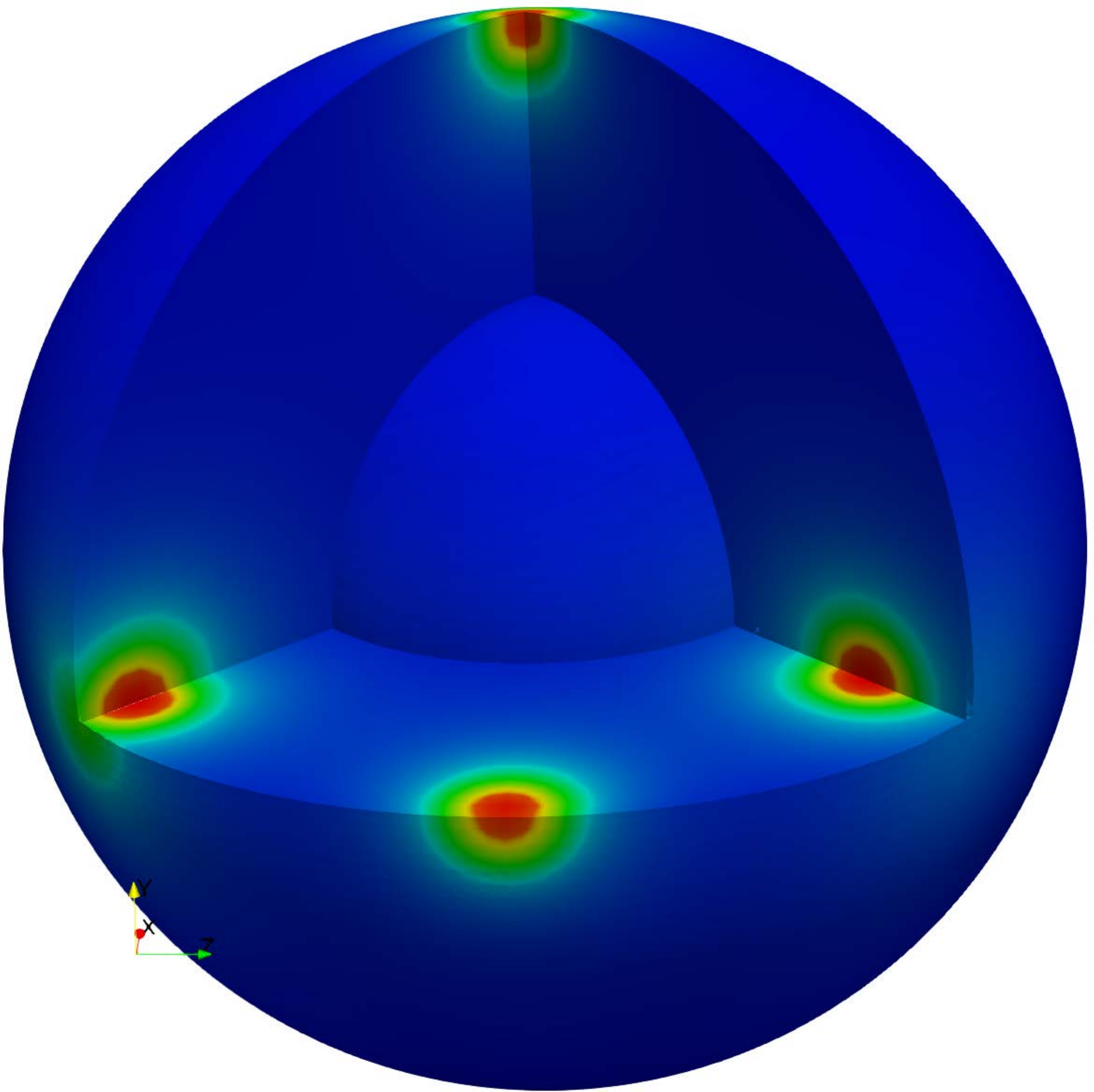}
	\caption{Example 2(a): four separate points buried at different depths within the Earth.}
	\label{fig:multiplea} 
\end{figure}

Next, we illustrate the imaging resolution of  our reconstruction method.
Two peanut-shaped magnetic anomalies, scaled by $\delta=0.03$, are moving towards each other within the same slice plane of the shell. The resolution limit is reached when the highlighted parts of individual magnetic anomalies merge into one
as shown in Figure~\ref{fig:multipleb}. Our reconstruction method can separate these two close magnetic anomalies  well. The location of the magnetic anomalies agrees well with the true ones even under
large noise $\beta=10\%$. When we further reduce the distance between those magnetic anomalies (less than 0.05), the method cannot separate the
two magnetic anomalies any longer, which is below the resolution limit.

\begin{figure}[!htb]
	\includegraphics[width=0.4\textwidth]{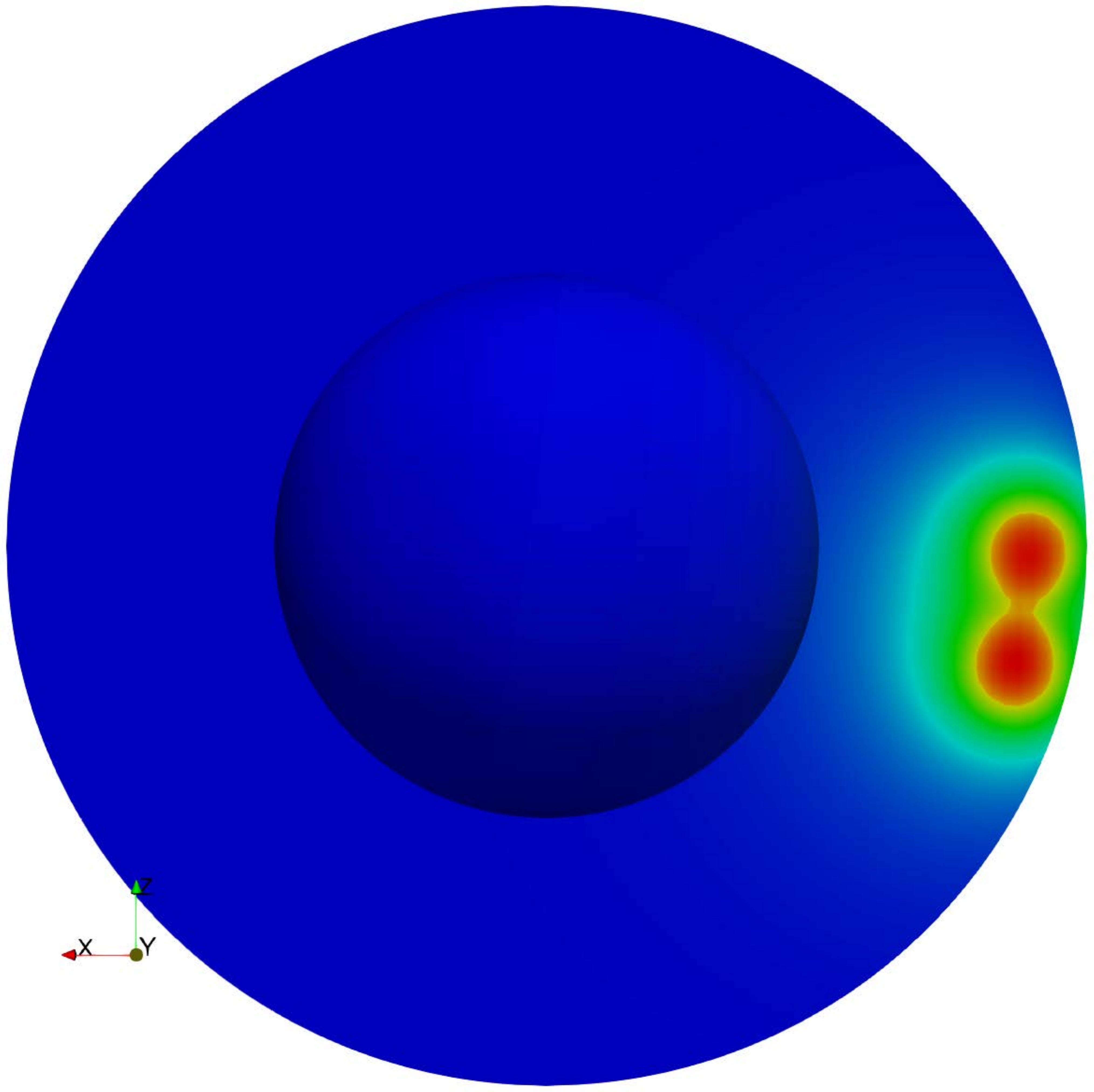}
	\caption{Example 2(b): two points within the Earth to illustrate the resolution limit from top viewpoint.}
	\label{fig:multipleb} 
\end{figure}

\subsection*{Example 3: Extended magnetic anomaly}

Motivated by the merging of blow-up behavior of two close and isolated magnetic anomalies,  We consider in the last example an L-shape
extended magnetic anomaly, which is of size $0.03$ on the cross-section and with armlength $0.5$ along each direction.  This L-shaped magnetic anomaly is more challenging and even out of the scope of the smallness assumption associated with Theorem~\ref{thm:main3}. The reconstruction is shown in Figure~\ref{fig:Lshaped}. One
can see that an L-shaped region is highlighted in red and could be identified with $\beta=10\%$ noise level. This example shows interesting extension of our imaging scheme to determine extended magnetic anomalies with partially small size, like curved line cracks.

\begin{figure}[!htb]
	\includegraphics[width=0.4\textwidth]{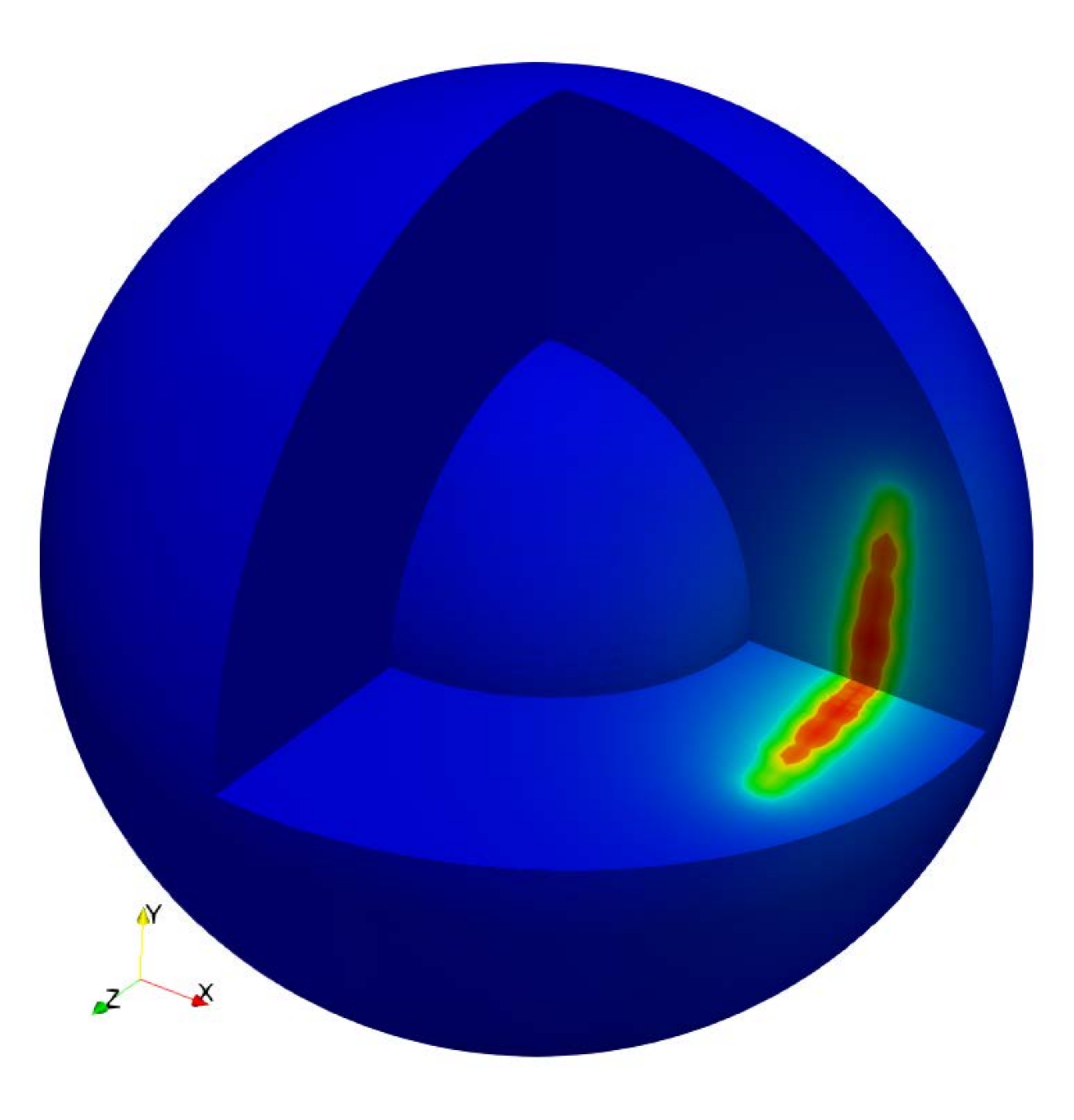}
	\caption{Example 4: an L-shaped line segment within the shell of  the Earth.}
	\label{fig:Lshaped} 
\end{figure}

\subsection*{Example 4: Limited-aperture measurement data}

We consider the reconstruction by using limited-aperture data as discussed in Section~\ref{sect:limitedaperture}. The reconstructed results are shown in Figure \ref{fig:compare} and Table \ref{table1}.

\begin{figure}[!htb]
	\includegraphics[width=0.3\textwidth]{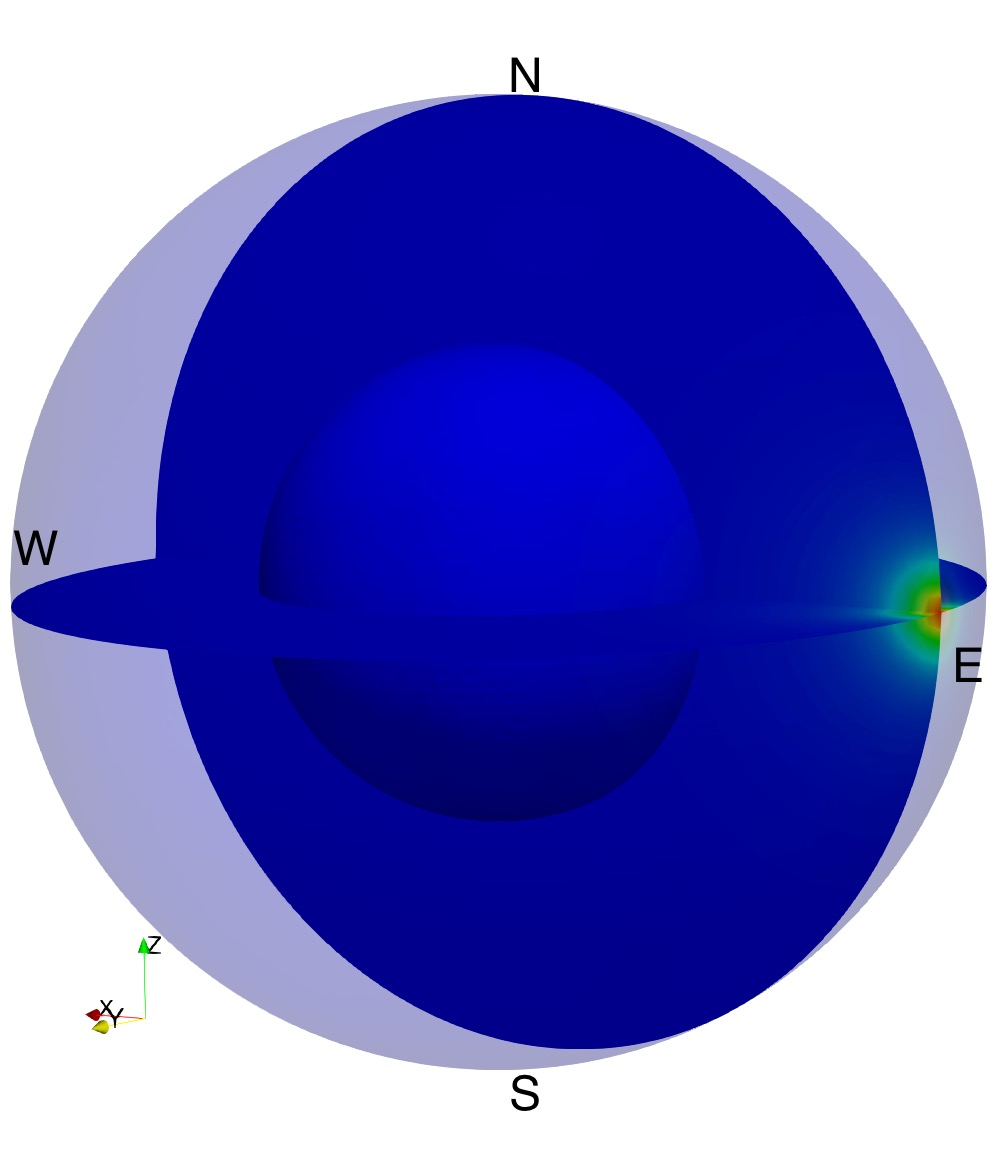}
	\includegraphics[width=0.3\textwidth]{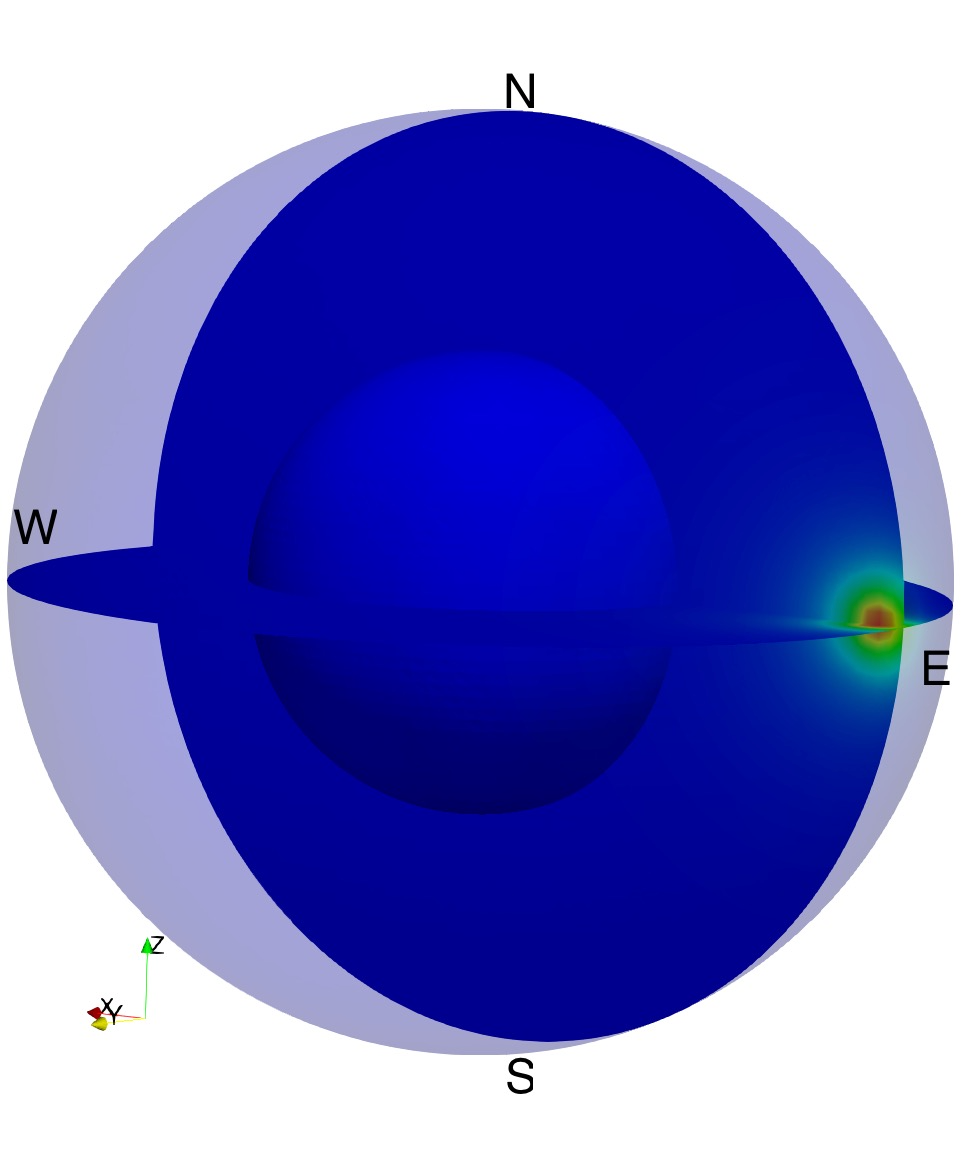}
	\includegraphics[width=0.3\textwidth]{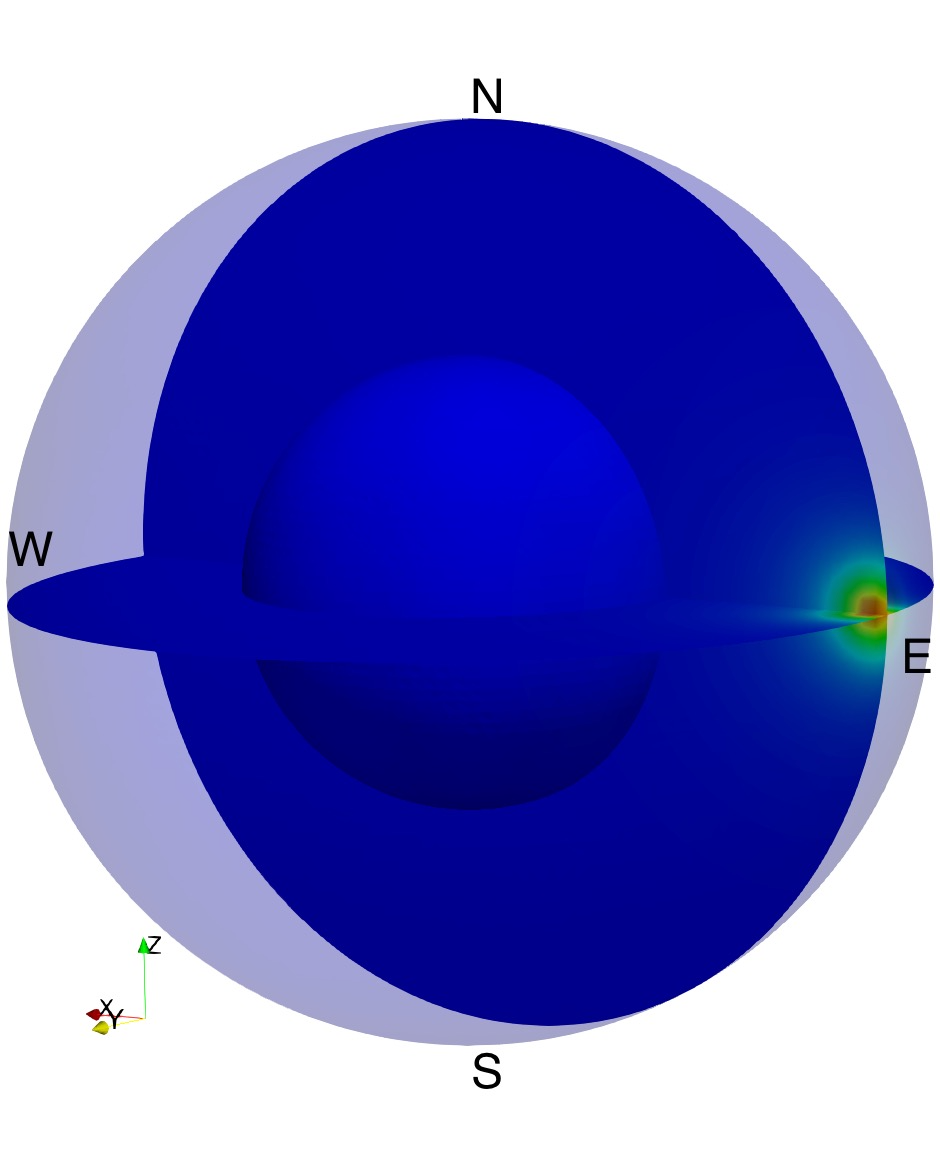}
	\caption{Comparison of the reconstructed results using different data. The data used for the reconstruction are full measurement data on the sphere (left), partial measurement data on the eastern hemisphere (middle), interpolated data from partial measurement data on the eastern hemisphere (right), respectively.}
	\label{fig:compare} 
\end{figure}

\begin{table}[htp]
\caption{The reconstructed positions using different measured data.}
\begin{center}
\begin{tabular}{|>\centering m{0.6\textwidth}|c|}
\hline
\textbf{Data type} & \textbf{Reconstructed position}   \\
\hline
Full measurement data on the sphere with \textbf{1\% noise}. & (0.993, 0.007, -0.008) \\
\hline
Partial measurement data on the eastern \textbf{hemisphere} with \textbf{1\% noise}. & (0.956, -0.022, 0.020) \\
\hline
Interpolated data from partial measurement data on the eastern \textbf{hemisphere} with \textbf{1\% noise}. & (0.974, -0.015, 0.013)\\
\hline
Partial measurement data on the eastern \textbf{quarter-sphere} with \textbf{1\% noise}. & (0.883, -0.062, 0.056) \\
\hline
Interpolated data from partial measurement data on the eastern \textbf{quarter-sphere} with \textbf{1\% noise}. & (0.897, 0.055, -0.052)\\
\hline
Partial measurement data on the eastern \textbf{quarter-sphere} with \textbf{0.1\% noise}. & (0.908, -0.051, 0.045) \\
\hline
Interpolated data from partial measurement data on the eastern \textbf{quarter-sphere} with \textbf{0.1\% noise}. & (0.919, -0.048, 0.041)\\
\hline
Partial measurement data on the eastern \textbf{quarter-sphere} without noise. & (0.922, 0.041, -0.038) \\
\hline
Interpolated data from partial measurement data on the eastern \textbf{quarter-sphere} without noise. & (0.943, -0.028, 0.017)\\
\hline
\end{tabular}
\end{center}
\label{table1}
\end{table}%

\section*{Acknowledgments}

The work of Hongyu Liu is supported by the Hong Kong RGC General Research Funds (projects 12302919, 12301420 and 11300821),  the NSFC/RGC Joint Research Fund (project N\_CityU101/21), the France-Hong Kong ANR/RGC Joint Research Grant, A-HKBU203/19. The work of Li was partially supported by the NSF of China No. 11971221, Guangdong NSF Major Fund No. 2021ZDZX1001, the Shenzhen Sci-Tech Fund No. RCJC20200714114556020, JCYJ20200109115422828 and JCYJ20190809150413261,  National Center for Applied Mathematics Shenzhen, and SUSTech International Center for Mathematics.


\end{document}